\documentclass[final,leqno]{siamltex704}
\usepackage{amsmath}
\usepackage{amssymb}
\usepackage{graphicx}
\usepackage[notcite,notref]{showkeys}
\newcommand{\bv}{{\bf v}}

\newcommand{\bn}{{\bf n}}

\newcommand{\pT}{{\partial T}}
\def\bbQ{\mathbb{Q}}
\def\T{{\mathcal T}}
\def\E{{\mathcal E}}

\def\W{{\mathcal W}}
  \def\b#1{\mathbf{#1}}
\def\a#1{\begin{align*}#1\end{align*}}\def\p#1{\begin{pmatrix}#1\end{pmatrix}}
\def\an#1{\begin{align}#1\end{align}}
\def\l{{\langle}}
\def\r{{\rangle}}\def\d{\operatorname{div}\,}
\def\3bar{{|\hspace{-.02in}|\hspace{-.02in}|}}
\newtheorem{defi}{Definition}[section]

\newtheorem{remark}{Remark}[section]
\newtheorem{algorithm}{Weak Galerkin Algorithm}

\setlength{\parindent}{0.25in} \setlength{\parskip}{0.08in}

\title{ A $C^0$-Weak Galerkin Finite Element
Method for the Biharmonic Equation}

\author{
Lin Mu\thanks{Department of Mathematics, Michigan State University,
      East Lansing, MI 48824 (linmu@ msu.edu)}
\and
Junping Wang\thanks{Division of Mathematical Sciences, National Science
  Foundation, Arlington, VA 22230 (jwang@\break nsf.gov). The research
  of Wang was supported by the NSF IR/D program, while working at
  National Science Foundation. However, any opinion, finding, and
  conclusions or recommendations expressed in this material are those
  of the author and do not necessarily reflect the views of the
  National Science Foundation,}
\and
Xiu Ye\thanks{Department of
  Mathematics, University of Arkansas at Little Rock, Little Rock, AR
  72204 (xxye@ualr.edu). This research was supported in part by
  National Science Foundation Grant DMS-1115097.}
\and
Shangyou Zhang\thanks{Department of Mathematical Sciences
University of Delaware, Newark, DE 19716 (szhang\@\ udel.edu)}}

\begin{document}

\maketitle

\begin{abstract}
A  $C^0$-weak Galerkin (WG) method is introduced and analyzed for
solving the biharmonic equation in 2D and 3D. A weak Laplacian is
defined for $C^0$ functions in the new weak formulation. This WG
finite element formulation is symmetric, positive definite and
parameter free. Optimal order error estimates are established in
both a discrete $H^2$ norm and the $L^2$ norm, for the weak Galerkin
finite element solution. Numerical results are presented to confirm
the theory. As a technical tool, a refined Scott-Zhang interpolation
operator is constructed to assist the corresponding error estimate.
This refined interpolation preserves the volume mass of order
$(k+1-d)$ and the surface mass of order $(k+2-d)$ for the $P_{k+2}$
finite element functions in $d$-dimensional space.
\end{abstract}

\begin{keywords}
  weak Galerkin, finite element methods, weak Laplacian,
  biharmonic equation, triangular mesh, tetrahedral mesh,
  Scott-Zhang interpolation
\end{keywords}

\begin{AMS}
Primary, 65N30, 65N15; Secondary, 35B45
\end{AMS}
\pagestyle{myheadings}

\section{Introduction}

We consider the biharmonic equation of the form
\begin{eqnarray}
\Delta^2 u&=&f,\quad \mbox{in}\;\Omega,\label{pde}\\
u&=&g,\quad\mbox{on}\;\partial\Omega,\label{bc-d}\\
\frac{\partial u}{\partial
n}&=&\phi,\quad\mbox{on}\;\partial\Omega,\,\label{bc-n}
\end{eqnarray}
where $\Omega$ is a  bounded polygonal or
polyhedral domain in $\mathbb{R}^d$ for $d=2, 3$.
For the biharmonic problem (\ref{pde}) with Dirichlet and Neumann boundary conditions (\ref{bc-d}) and (\ref{bc-n}), the corresponding
variational form is given by seeking $u\in H^2(\Omega)$ satisfying
$u|_{\partial \Omega}=g$ and $\frac{\partial u}{\partial
n}|_{\partial \Omega}=\phi$ such that
\begin{equation}\label{wf}
(\Delta u, \Delta v) = (f, v) \qquad \forall v\in H_0^2(\Omega),
\end{equation}
where $H_0^2(\Omega)$ is the subspace of $H^2(\Omega)$ consisting of
functions with vanishing value and normal derivative on
$\partial\Omega$.

The conforming finite element methods for the forth order problem
(\ref{wf}) require the finite element space be a subspace of
$H^2(\Omega)$. It is well known that constructing $H^2$ conforming
finite elements is generally quite challenging, specially in three
and higher dimensional spaces. Weak Galerkin finite element method,
first introduce in \cite{wy} (see also \cite{wy-mixed} and
\cite{mwy-wg-stabilization} for extensions), by design is to use
nonconforming elements to relax the difficulty in the construction
of conforming elements. Unlike the classical nonconforming finite
element method where standard derivatives are taken on each element,
the weak Galerkin finite element method relies on weak derives taken
as approximate distributions for the functions in nonconforming
finite element spaces. In general, weak Galerkin method refers to
finite element techniques for partial differential equations in
which differential operators (e.g., gradient, divergence, curl,
Laplacian) are approximated by weak forms as distributions.

A weak Galerkin method for the biharmonic equation has been derived
in \cite{mwy-bi} by using totally discontinuous functions of
piecewise polynomials on general partitions of arbitrary shape of
polygons/polyhedra. The key of the method lies in the use of a
discrete weak Laplacian plus a stabilization that is parameter-free.
In this paper, we will develop a new weak Galerkin method for the
biharmonic equation (\ref{pde})-(\ref{bc-n}) by redefining a weak
Laplacian, denoted by $\Delta_w$, for $C^0$ finite element
functions. Comparing with the WG method developed in \cite{mwy-bi},
the $C^0$-weak Galerkin finite element formulation has less number
of unknowns due to the continuity requirement. On the other hand,
due to the same continuity requirement, the $C^0$-WG method allows
only traditional finite element partitions (such as
triangles/quadrilaterals in 2D), instead of arbitrary
polygonal/polyhedral grids as allowed in \cite{mwy-bi}.

A suitably-designed interpolation operator is needed for the
convergence analysis of the $C^0$-weak Galerkin formulation. The
Scott-Zhang operator \cite{Scott-Zhang} turns out to serve the
purpose well with a refinement. This paper shall introduce a refined
version of the Scott-Zhang operator so that it preserves the volume
mass up to order $(k+1-d)$, and the surface mass up to order
$(k+2-d)$, when interpolating $H^1$ functions to the $P_{k+2}$
$C^0$-finite element space:
  \a{  Q_0 \ &  : \ H^1(\Omega) \to C^0\hbox{-}P_{k+2}, \\
       \int_T(v-Q_0 v) p dT &= 0  \quad \forall p\in P_{k+1-d}(T), \\
       \int_E(v-Q_0 v) p dE &= 0  \quad \forall p\in P_{k+2-d}(T), }
  where $T$ is any triangle ($d=2$) or tetrahedron ($d=3$) in
    the finite element, and $E$ is an edge or a face-triangle of $T$.
With the operator $Q_0$,  we can show an optimal order of approximation property of
   the $C^0$-finite element space, under the constraints
     of weak Galerkin formulation.
 Consequently, we show  optimal order of convergence in
   both a discrete $H^2$ norm and the $L^2$ norm, for the $C^0$
   weak Galerkin finite element solution.

The biharmonic equation models a plate bending problem, which is
   one of the first applicable problems of the finite element method,
   cf. \cite{Fraeijs,Argyris,Clough,Zlamal}.
The standard finite element method, i.e., the conforming element,
  requires a $C^1$ function space of piecewise polynomials.
This would lead to a high  polynomial degree
    \cite{Argyris,sz3,sz,Zenisek},
    or a macro-element \cite{Clough,ddps,Fraeijs,hhz,Powell,sz2},
    or a constraint element (where the polynomial degree is reduced
      at inter-element boundary) \cite{Bell,Percell,Zlamal}.
Mixed methods for the biharmonic equation avoid using $C^1$ element by
  reducing the fourth order equation to a system of two second
     order equations, \cite{ab,falk,gnp,monk,mwwy}.
Many other different  nonconforming and discontinuous finite element
   methods have been developed for solving the biharmonic equation.
Morley element \cite{morley} is a well known nonconforming element
    for its simplicity.
$C^0$ interior penalty methods are studied in \cite{bs, ghlmt, ms},
   which are similar to our $C^0$-weak Galerkin method except
  there is no penalty parameter here.

\section{Weak Laplacian and discrete weak Laplacian}
\label{Section:weak-Laplacian}

Let $D$ be a bounded polyhedral domain in $\mathbb{R}^d, d=2, 3$.
We use the standard definition for the Sobolev space $H^s(D)$
   and their associated inner products $(\cdot,\cdot)_{s,D}$,
   norms $\|\cdot\|_{s,D}$, and
   seminorms $|\cdot|_{s,D}$ for any $s\ge 0$.
When $D=\Omega$, we shall drop
   the subscript $D$ in the norm and in the inner product.

Let $T$ be a triangle or a tetrahedron with boundary
    $\partial T$.
A {\em weak function} on the region $T$ refers to a
  vector function $v=\{v_0, \bv_{n}\}$ such that $v_0\in L^2(T)$
    and $\bv_n\cdot\bn\in H^{-\frac12}(\partial T)$, where $\bn$
    is the outward normal direction of $T$ on its boundary.
The first component $v_0$  can be understood as the value of $v$ on $T$
   and the second component $\bv_n$ represents the value
           $\nabla v$ on the boundary of $T$.
Note that $\bv_n$ may not be necessarily related to the trace of
   $\nabla v_0$ on $\partial T$.
Denote by $\W(T)$ the space of all weak functions on $T$; i.e.,
\begin{equation}\label{www}
  \W(T) = \left\{v=\{v_0,\bv_n \}:\ v_0\in L^2(T),\;
       \bv_n\cdot\bn\in H^{-\frac12}(\partial T)\right\}.
\end{equation}
Let  $(\cdot,\cdot)_T$ stand for the $L^2$-inner product in
 $L^2(T)$, $\langle\cdot,\cdot\rangle_\pT$ be the inner product in
 $L^2(\pT)$. For convenience, define $G^2(T)$  as follows
$$
G^2(T)=\{\varphi: \ \varphi\in H^1(T),\  \Delta\varphi\in L^2(T)\}.
$$
It is clear that, for any $\varphi\in G^2(T)$, we have
 $\nabla\varphi\in H(\d,T)$.
It follows that
  $\nabla\varphi\cdot\bn\in  H^{-\frac12}(\partial T)$ for any
  $\varphi\in G^2(T)$.

\medskip

\begin{defi}
The dual of $L^2(T)$ can be identified with itself by using the
  standard $L^2$ inner product as the action of linear functionals.
With a similar interpretation, for any $v\in \W(T)$, the {\em weak
  Laplacian} of $v=\{v_0,\bv_n \}$ is defined as a linear
  functional $\Delta_w v$ in the dual space of $G^2(T)$ whose action
  on each $\varphi\in G^2(T)$ is given by
\begin{equation}\label{wl}
  (\Delta_w v, \ \varphi)_T = (v_0, \ \Delta\varphi)_T
     -\l v_0,\ \nabla\varphi\cdot\bn\r_\pT +\l
     \bv_n\cdot\bn, \ \varphi\r_\pT,
\end{equation}
   where $\bn$ is the outward normal direction to $\partial T$.
\end{defi}

The Sobolev space $H^2(T)$ can be embedded into the space $\W(T)$ by
  an inclusion map $i_\W: \ H^2(T)\to \W(T)$ defined as follows
$$
i_\W(\phi) = \{\phi|_{T},  \nabla\phi|_{\partial T}\},\qquad \phi\in H^2(T).
$$
With the help of the inclusion map $i_\W$, the Sobolev space $H^2(T)$
  can be viewed as a subspace of $\W(T)$ by identifying each $\phi\in
  H^2(T)$ with $i_\W(\phi)$.
Analogously, a weak function
  $v=\{v_0,\bv_n\}\in \W(T)$ is said to be in $H^2(T)$ if it can be
  identified with a function $\phi\in H^2(T)$ through the above
  inclusion map.
It is not hard to see that the weak Laplacian is
  identical with the strong Laplacian, i.e.,
  \a{ \Delta_w i_\W (v)=\Delta v } for
  smooth functions $v\in H^2(T)$.

Next, we introduce a discrete weak Laplacian operator by
approximating $\Delta_w$ in a polynomial subspace of the dual of
$G^2(T)$. To this end, for any non-negative integer $r\ge 0$, denote
by $P_{r}(T)$ the set of polynomials on $T$ with degree no more than
$r$. A discrete weak Laplacian operator, denoted by $\Delta_{w,r,T}$,
is defined as the unique polynomial $\Delta_{w,r,T}v \in P_r(T)$ that
satisfies the following equation
\begin{equation}\label{dwl}
  (\Delta_{w,r,T} v, \ \varphi)_T = ( v_0, \ \Delta\varphi)_T-\l v_0,\
  \nabla\varphi\cdot\bn\r_\pT +\l \bv_n\cdot\bn, \ \varphi\r_\pT \quad
  \forall \varphi\in P_r(T).
\end{equation}
Recall that $\bv_n$ represent the $\nabla v$ on $e\in\pT$.
Define $\bar{\bv}_n=(\nabla v\cdot\bn)\bn\equiv v_n\bn$.
Obviously, $\bar{\bv}_n\cdot\bn=\bv_n\cdot\bn$.
Since the quantity of interest is not $\bv_n$ but $\bv_n\cdot\bn$,
  we can replace $\bv_n$ by $\bar{\bv}_n=v_n\bn$ from now on
   to reduce the number of unknowns.
Scalar $v_n$ represents $\nabla\bv\cdot\bn$.

\section{Weak Galerkin Finite Element Methods}\label{Section:wg-fem}

Let ${\cal T}_h$ be a triangular ($d=2$) or a tetrahedral ($d=3$)
  partition of the domain $\Omega$
  with mesh size $h$.
Denote by ${\cal E}_h$ the set of all edges or  faces in ${\cal
T}_h$, and let ${\cal E}_h^0={\cal E}_h\backslash\partial\Omega$ be
the set of all interior edges or faces.

Since $\bv_n=v_n\bn$ with $v_n$ representing $\nabla v\cdot\bn$,
  obviously, $v_n$  is dependent on  $\bn$.
To ensure $v_n$ a single values function on $e\in\E_h$,
  we introduce a set of normal directions on ${\cal E}_h$ as follows
\begin{equation}\label{thanks.101}
{\cal D}_h = \{\bn_e: \mbox{ $\bn_e$ is unit and normal to $e$},\
e\in {\cal E}_h \}.
\end{equation}
Then, we can define a weak Galerkin finite element space
   $V_h$ for $k\ge 0$ as follows
\begin{equation}\label{Vh}
 V_h=\{v=\{v_0, v_{n}\bn_e\}:\ v_0\in V_0,\
   v_{n}|_e\in P_{k+1}(e), e\subset\partial T\},
  \end{equation}
where $v_n$ can be viewed as an approximation of $\nabla v\cdot\bn_e$ and
\begin{equation}\label{V0}
V_0=\{v\in H^1(\Omega); v|_T\in P_{k+2}(T)\}.
\end{equation}
Denote by $V_h^0$ a subspace
of $V_h$ with vanishing traces; i.e.,
$$
V_h^0=\{v=\{v_0,v_{n}\bn_e\}\in V_h, {v_0}|_e=0,\ {v_{n}}|_e=0,\
e\subset\partial T\cap\partial\Omega\}.
$$
Denote by $\Lambda_h$ the trace of $V_h$ on $\partial\Omega$ from
the component $v_0$. It is easy to see that $\Lambda_h$ consists of
piecewise polynomials of degree $k+2$. Similarly, denote by
$\Upsilon_h$ the trace of $V_h$ from the component $v_n$ as
piecewise polynomials of degree $k+1$.
Let  $\Delta_{w,k}$ be the discrete weak Laplacian operator on
the finite element space $V_h$ computed by using
(\ref{dwl}) on each element $T$ for $k\ge 0$; i.e.,
\an{\label{Dw}
  (\Delta_{w,k}v)|_T =\Delta_{w,k, T} (v|_T) \qquad \forall v\in
   V_h.
}
For simplicity of notation, from now on we shall drop the subscript
$k$ in the notation $\Delta_{w,k}$ for the discrete weak Laplacian.
We also introduce the following notation
$$
(\Delta_w v,\;\Delta_w w)_h=\sum_{T\in {\cal T}_h}(\Delta_w
v,\;\Delta_w w)_T.
$$
For any $u_h=\{u_0,u_{n}\bn_e\}$ and
  $v=\{v_0,v_{n}\bn_e\}$ in $V_h$, we introduce a bilinear form as
   follows
\an{\label{s}
  s(u_h, v)=\sum_{T\in\T_h} h_T^{-1}\langle \nabla u_0\cdot\bn_e-u_{n}, \
  \nabla v_0\cdot\bn_e-v_{n}\rangle_\pT.
  }
The stabilizer $s(u_h, v)$ defined above is to enforce a connection
   between the normal derivative of $u_0$ along $\bn_e$ and its approximation
   $u_n$.
\begin{algorithm}
A numerical approximation for (\ref{pde})-(\ref{bc-n}) can be
   obtained by seeking $u_h=\{u_0,\ u_{n}\bn_e\}\in V_h$
  satisfying $u_0=Q_b g$ and $u_{n}=(\bn\cdot\bn_e)Q_{n}\phi$
   on $\partial \Omega$ and the following equation:
\begin{equation}\label{wg}
(\Delta_w u_h,\ \Delta_w v)_h+s(u_h,\ v)=(f,\;v_0)  \quad\forall\
v=\{v_0,\ v_{n}\bn_e\}\in V_h^0,
\end{equation}
where $Q_b g$ and $Q_{n}\phi$ are the standard $L^2$ projections
onto the trace spaces $\Lambda_h$ and $\Upsilon_h$, respectively.
\end{algorithm}

\smallskip

\begin{lemma}
The weak Galerkin finite element scheme (\ref{wg}) has a unique
solution.
\end{lemma}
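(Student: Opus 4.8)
The plan is to use the fact that, once the boundary data are prescribed, (\ref{wg}) is a square linear system on the finite-dimensional space $V_h$, so existence and uniqueness both follow from showing that the homogeneous problem ($f\equiv 0$, $g\equiv 0$, $\phi\equiv 0$, hence $u_h\in V_h^0$) admits only $u_h=0$. Accordingly I would assume $u_h=\{u_0,u_n\bn_e\}\in V_h^0$ satisfies $(\Delta_w u_h,\Delta_w v)_h+s(u_h,v)=0$ for all $v\in V_h^0$ and take $v=u_h$. Since $(\Delta_w u_h,\Delta_w u_h)_h=\sum_{T\in\T_h}\|\Delta_w u_h\|_{0,T}^2\ge 0$ and $s(u_h,u_h)\ge 0$, their sum being zero forces $\Delta_w u_h=0$ on every $T\in\T_h$ and $s(u_h,u_h)=0$. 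The latter, by the definition (\ref{s}), gives $\nabla u_0\cdot\bn_e=u_n$ on $\pT$ for each element $T$; in particular the normal flux of $u_0$ is single-valued across every interior face.

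Next I would convert $\Delta_w u_h=0$ into a pointwise statement about $u_0$. Writing (\ref{dwl}) with $\Delta_{w,k,T}(u_h|_T)=0$ and applying Green's identity to the volume term $(u_0,\Delta\varphi)_T$ (legitimate since $u_0|_T\in P_{k+2}(T)\subset H^2(T)$), the two terms containing $\l u_0,\nabla\varphi\cdot\bn\r_\pT$ cancel, leaving
\[
(\Delta u_0,\varphi)_T-\l\nabla u_0\cdot\bn,\varphi\r_\pT+\l u_n(\bn_e\cdot\bn),\varphi\r_\pT=0 \qquad\forall\,\varphi\in P_k(T).
\]
On each $e\subset\pT$ one has $\bn=\pm\bn_e$, so $\nabla u_0\cdot\bn=(\bn_e\cdot\bn)(\nabla u_0\cdot\bn_e)=(\bn_e\cdot\bn)u_n$ once $\nabla u_0\cdot\bn_e=u_n$ is used; hence the last two boundary terms cancel and $(\Delta u_0,\varphi)_T=0$ for all $\varphi\in P_k(T)$. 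Since $\Delta u_0|_T\in P_k(T)$, taking $\varphi=\Delta u_0|_T$ gives $\Delta u_0=0$ on every $T\in\T_h$.

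Finally I would run an elementwise energy identity: multiplying $\Delta u_0=0$ by $u_0$ and integrating by parts on each $T$,
\[
0=\sum_{T\in\T_h}(\Delta u_0,u_0)_T=-\sum_{T\in\T_h}\|\nabla u_0\|_{0,T}^2+\sum_{T\in\T_h}\l\nabla u_0\cdot\bn,u_0\r_\pT.
\]
The boundary sum vanishes: on an interior face $e=\partial T_1\cap\partial T_2$ the two traces of $u_0$ coincide (because $u_0\in H^1(\Omega)$) while the normal fluxes from the two sides are $+u_n$ and $-u_n$ (since $\nabla u_0|_{T_i}\cdot\bn_e=u_n$), so the contributions cancel; on $\partial\Omega$ the trace of $u_0$ is zero because $u_h\in V_h^0$. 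Therefore $\nabla u_0=0$ on each $T$, so $u_0$ is piecewise constant, and being in $H^1(\Omega)$ on the connected domain $\Omega$ with vanishing boundary trace it must be identically zero; then $u_n=\nabla u_0\cdot\bn_e=0$ as well, so $u_h=0$. The only step needing genuine care is the orientation bookkeeping between the element normal $\bn$ and the fixed face normal $\bn_e\in{\cal D}_h$ when cancelling the boundary contributions coming from the weak Laplacian and from the stabilizer; the remaining manipulations are routine.
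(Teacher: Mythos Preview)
Your proof is correct and follows essentially the same route as the paper: reduce to the homogeneous problem, test with $v=u_h$ to force $\Delta_w u_h=0$ and $\nabla u_0\cdot\bn_e=u_n$, then use (\ref{dwl}) with integration by parts and the orientation identity $u_n\bn_e\cdot\bn-\nabla u_0\cdot\bn=\pm(u_n-\nabla u_0\cdot\bn_e)=0$ to conclude $\Delta u_0=0$ on each $T$. The only difference is in the endgame: the paper observes that $u_0$, being $C^0$ with matching normal derivative across faces and piecewise harmonic, is a globally harmonic function on $\Omega$, and then invokes the boundary data $u_0=0$, $u_n=0$; you instead run an explicit elementwise energy identity $\sum_T(\Delta u_0,u_0)_T=-\sum_T\|\nabla u_0\|_T^2+\sum_T\langle\nabla u_0\cdot\bn,u_0\rangle_{\pT}$ and cancel the interface terms directly. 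Your version is a bit more self-contained (it avoids the implicit regularity/uniqueness step behind ``globally harmonic implies zero''), while the paper's phrasing is more concise; the underlying content is the same.
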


\begin{proof}
It suffices to show that the solution of (\ref{wg}) is trivial if
$f=g=\phi=0$. To this end, assume $f=g=\phi=0$ and take $v=u_h$  in
(\ref{wg}). It follows that
\[
(\Delta_w u_h,\Delta_w u_h)_h+s(u_h, u_h)=0,
\]
which implies that $\Delta_w u_h=0$ on each element $T$ and
$ \nabla u_0\cdot\bn_e=u_{n}$ on $\pT$. We claim that
$\Delta u_h=0$ holds true locally on each element $T$. To this end,
it follows from $\Delta_w u_h=0$ and (\ref{dwl}) that for any
$\varphi\in P_k(T)$ we have
\begin{eqnarray}
\label{april14.01}
  0=(\Delta_{w} u_h, \ \varphi)_T &=& (u_0, \ \Delta\varphi)_T-\l
  u_0,\ \nabla\varphi\cdot\bn\r_\pT +\l u_n\bn_e\cdot\bn, \
     \varphi\r_\pT \\
  &=&(\Delta u_0, \varphi)_T
   +\l u_n\bn_e\cdot\bn-\nabla u_0\cdot\bn,\ \varphi\r_\pT\nonumber\\
   &=&(\Delta u_0, \varphi)_T,\nonumber
\end{eqnarray}
where we have used
\begin{equation}\label{ne}
u_n\bn_e\cdot\bn-\nabla u_0\cdot\bn=\pm(u_n-\nabla u_0\cdot\bn_e)=0
\end{equation}
in the last equality. The identity (\ref{april14.01}) implies that
$\Delta u_0=0$ holds true locally on each element $T$. This,
together with  $\nabla u_0\cdot\bn_e=u_{n}$ on $\pT$,
shows that $u_h$ is a smooth harmonic function globally on $\Omega$.
The boundary condition of $u_0=0$ and $u_n=0$ then implies that $u_h\equiv 0$ on
$\Omega$, which completes the proof.
\end{proof}

\section{Projections: Definition and Approximation Properties}
\label{Section:L2projections}

In this section, we will introduce some locally defined
projection operators corresponding to the finite element space
$V_h$ with optimal convergent rates.

Let $Q_0 : H^1(\Omega) \to V_0$  be a special Scott-Zhang
    interpolation operator, to be defined in \eqref{s-z} in Appendix,
   such that for given
   $v\in H^1(\Omega) $, $Q_0v\in V_0$  and  for any $T\in\T_h$,
\begin{equation}\label{Q0}
  ( Q_0v, \ \Delta\varphi)_T-\l Q_0v,\
  \nabla\varphi\cdot\bn\r_\pT=
  ( v, \ \Delta\varphi)_T-\l v,\
  \nabla\varphi\cdot\bn\r_\pT, \quad\forall \varphi\in P_k(T),
  \end{equation}
and for $0\le s\le 2$
\begin{equation}\label{u-q0u}
   (\sum_{T\in\T_h}h^{2s}\|u-Q_0u\|^2_{s,T})^{1/2}\le C h^{k+3}\|u\|_{k+3}.
   \end{equation}
Now  we  can define an interpolation operator $Q_h$ from
    $ H^2(\Omega)$  to the finite element space $V_h$ such
    that on the element $T$, we have
\an{\label{Qh}
    Q_h u = \{Q_0 u, (Q_{n} (\nabla u\cdot\bn_e))\bn_e\},
   } where $Q_0$ is defined in \eqref{s-z} and $Q_n$
        is the $L^2$ projection onto $P_{k+1}(e)$, for each $e\subset
    \partial T$.
In addition, let $\bbQ_h$ be the local $L^2$ projection onto
   $P_{k}(T)$.
For any $\varphi\in P_k(T)$ we have
\begin{eqnarray*}
   (\Delta_{w} Q_h u,\ \varphi)_T &=&
     (Q_0 u,\ \Delta\varphi)_T-\langle Q_0 u,
      \ \nabla\varphi\cdot\bn \rangle_{\pT} + \langle
   Q_{n}(\nabla u\cdot\bn_e)\bn_e\cdot\bn, \;\varphi\rangle_{\pT}\\
&=&(u, \Delta\varphi)_T -\langle u,\
 \nabla\varphi\cdot\bn \rangle_{\pT} + \langle \nabla u\cdot\bn,
  \ \varphi\rangle_{\partial T}
  \\
  &=&(\Delta u,\ \varphi)_T=(\bbQ_h\Delta u,\ \varphi)_T,
  \end{eqnarray*}
which implies
\begin{equation}\label{key-Laplacian}
\Delta_{w} Q_h u = \bbQ_h (\Delta u).
\end{equation}
The above identity indicates that the discrete weak Laplacian of a
  projection of $u$ is a good approximation of the Laplacian of $u$.

Let $T\in {\cal T}_h$ be an element with $e$ as an edge or a face triangle.
It is well known that
   there exists a constant $C$ such that for any function $g\in
   H^1(T)$
\begin{equation}\label{trace}
  \|g\|_{e}^2 \leq C \left( h_T^{-1} \|g\|_T^2 + h_T \|\nabla
   g\|_{T}^2\right).
\end{equation}

Define a mesh-dependent semi-norm $\3bar\cdot\3bar$ in the finite element
space $V_h$ as follows
\begin{equation}\label{3barnorm}
\3bar v\3bar^2=(\Delta_wv,\ \Delta_w v)_h+s(v,\;v),\qquad v\in V_h.
\end{equation}
Using (\ref{trace}), we can derive the following
estimates which are useful in the convergence analysis for the
WG-FEM (\ref{wg}).

\begin{lemma}\label{l2}
Let $w\in H^{k+3}(\Omega)$ and $v\in V_h$. Then there exists a
constant $C$ such that the following estimates hold true.
\begin{eqnarray}
&  \sum_{T\in\T_h} |\langle \Delta w-\bbQ_h\Delta w,\; (\nabla
v_0-v_{n}\bn_e)\cdot\bn\rangle_\pT|
\leq C h^{k+1}\|w\|_{k+3} \3bar v\3bar,\label{mmm1}\\
&  \sum_{T\in\T_h} h_T^{-1}|\langle (\nabla
Q_0w)\cdot\bn_e-Q_{n}(\nabla w\cdot\bn_e),\nabla v_0\cdot\bn_e
-v_{n}\rangle_\pT|\label{mmm3}\\
&  \qquad \le Ch^{k+1}\|w\|_{k+3}\3bar v\3bar.\nonumber
\end{eqnarray}
\end{lemma}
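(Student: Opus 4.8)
The plan is to estimate each sum by applying the trace inequality (\ref{trace}) to the quantities appearing inside the $L^2(\partial T)$ inner products, and then to absorb one of the factors into the $\3bar\cdot\3bar$ norm while controlling the other by the projection approximation properties. Throughout I would use the Cauchy--Schwarz inequality on each $\langle\cdot,\cdot\rangle_{\pT}$ and then on the sum over $T$.

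For (\ref{mmm1}): first apply Cauchy--Schwarz on each $\partial T$ to split the term as $\|\Delta w-\bbQ_h\Delta w\|_{\pT}\,\|(\nabla v_0-v_n\bn_e)\cdot\bn\|_{\pT}$. The second factor is exactly $\|\nabla v_0\cdot\bn_e-v_n\|_{\pT}$ up to sign (using $\bn=\pm\bn_e$ on $\pT$, cf. (\ref{ne})), so summing against the weight $h_T^{-1/2}$ and invoking the definition (\ref{s}) of the stabilizer gives $\big(\sum_T h_T\|\Delta w-\bbQ_h\Delta w\|_{\pT}^2\big)^{1/2}\,s(v,v)^{1/2}$, and $s(v,v)^{1/2}\le\3bar v\3bar$. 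For the first factor I would bound $h_T\|\Delta w-\bbQ_h\Delta w\|_{\pT}^2$ by (\ref{trace}) in terms of $\|\Delta w-\bbQ_h\Delta w\|_T^2 + h_T^2\|\nabla(\Delta w-\bbQ_h\Delta w)\|_T^2$; since $\bbQ_h$ is the local $L^2$ projection onto $P_k(T)$, standard approximation theory gives these are $O(h^{2(k+1)}\|w\|_{k+3}^2)$ after summing (note $\Delta w\in H^{k+1}$). This yields the claimed bound $Ch^{k+1}\|w\|_{k+3}\3bar v\3bar$.

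For (\ref{mmm3}): the structure is the same. Cauchy--Schwarz on each $\pT$ gives $\|\nabla Q_0w\cdot\bn_e-Q_n(\nabla w\cdot\bn_e)\|_{\pT}\,\|\nabla v_0\cdot\bn_e-v_n\|_{\pT}$, and after weighting by $h_T^{-1}$ and summing, the $v$-factor again produces $s(v,v)^{1/2}\le\3bar v\3bar$. The remaining piece is $\big(\sum_T h_T^{-1}\|\nabla Q_0w\cdot\bn_e-Q_n(\nabla w\cdot\bn_e)\|_{\pT}^2\big)^{1/2}$. Here I would insert $\nabla w\cdot\bn_e$ and use the triangle inequality: $\nabla Q_0w\cdot\bn_e-Q_n(\nabla w\cdot\bn_e) = (\nabla Q_0w-\nabla w)\cdot\bn_e + (I-Q_n)(\nabla w\cdot\bn_e)$. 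The first term is controlled by the trace inequality applied to $\nabla(w-Q_0w)$, giving $h_T^{-1}\|\nabla(w-Q_0w)\cdot\bn_e\|_{\pT}^2 \lesssim h_T^{-2}\|\nabla(w-Q_0w)\|_T^2 + \|\nabla^2(w-Q_0w)\|_T^2$, which after summing is $O(h^{2(k+1)}\|w\|_{k+3}^2)$ by (\ref{u-q0u}) with $s=1,2$. The second term is the $L^2(e)$ approximation error of $Q_n$ on $P_{k+1}(e)$ applied to $\nabla w\cdot\bn_e$; using a trace/approximation estimate on each face together with $\nabla w\in H^{k+2}$, this is also $O(h^{k+1}\|w\|_{k+3})$ after the weighted sum. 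Combining gives (\ref{mmm3}).

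The main obstacle is the bookkeeping around the $Q_0$ term in (\ref{mmm3}): one must pass from a $\partial T$-norm of the gradient error to interior $H^1$ and $H^2$ norms via (\ref{trace}), and then apply (\ref{u-q0u}) at both $s=1$ and $s=2$ with the right powers of $h$; getting the weights $h_T^{-1}$ to cancel correctly requires care. A secondary subtlety is ensuring that the regularity $w\in H^{k+3}$ is exactly what is needed — $\Delta w\in H^{k+1}$ for the $\bbQ_h$ estimate and $\nabla w\in H^{k+2}$ for the $Q_n$ estimate — so that no extra smoothness is silently assumed. Everything else is a routine combination of Cauchy--Schwarz, the trace inequality, and the already-established approximation bounds.
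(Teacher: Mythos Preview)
Your argument is correct and, for (\ref{mmm1}), matches the paper's proof line by line: Cauchy--Schwarz with weights $h_T^{1/2}$ and $h_T^{-1/2}$, the identification $(\nabla v_0-v_n\bn_e)\cdot\bn=\pm(\nabla v_0\cdot\bn_e-v_n)$ from (\ref{ne}), the trace inequality (\ref{trace}) on $\Delta w-\bbQ_h\Delta w$, and the standard approximation property of $\bbQ_h$.

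For (\ref{mmm3}) your proof is also correct but slightly longer than the paper's. The paper observes that $\nabla v_0\cdot\bn_e-v_n\in P_{k+1}(e)$ on each face $e$, so by the definition of $Q_n$ as the $L^2$ projection onto $P_{k+1}(e)$ one may replace $Q_n(\nabla w\cdot\bn_e)$ by $\nabla w\cdot\bn_e$ \emph{inside the pairing} before applying Cauchy--Schwarz; this eliminates the $(I-Q_n)(\nabla w\cdot\bn_e)$ term entirely and leaves only $\|(\nabla Q_0w-\nabla w)\cdot\bn_e\|_{\pT}$ to estimate via (\ref{trace}) and (\ref{u-q0u}). Your route---Cauchy--Schwarz first, then the triangle inequality to split off and separately bound the $(I-Q_n)$ piece---reaches the same conclusion but costs an extra face-approximation estimate. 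The orthogonality trick is worth noting since it is used again later in the paper (e.g.\ in the $L^2$ error analysis).
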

\begin{proof}
To derive (\ref{mmm1}), we can use the Cauchy-Schwarz inequality, (\ref{ne}),  the
trace inequality (\ref{trace}), and the definition of $\bbQ_h$ to
obtain
\begin{eqnarray*}
&&\sum_{T\in\T_h} |\langle \Delta w-\bbQ_h\Delta w,\; (\nabla
v_0-v_{n}\bn_e)\cdot\bn\rangle_\pT|\\
&&\le
\left(\sum_{T\in\T_h} h_T\|\Delta w-\bbQ_h\Delta w\|_\pT^2\right)^{\frac12}\left(\sum_{T\in\T_h} h_T^{-1}
\|\nabla v_0\cdot\bn_e-v_{n}\|_\pT^2\right)^{\frac12}\\
&&\le C\left(\sum_{T\in\T_h}\left(\|\Delta w-\bbQ_h\Delta
w\|_T^2+h_T^2\|\nabla(\Delta w-\bbQ_h\Delta w)
\|_T^2\right)\right)^{\frac12}\3bar v\3bar\\
&&\le C h^{k+1}\|w\|_{k+3} \3bar v\3bar.
\end{eqnarray*}
As to (\ref{mmm3}), we have from the definition of $Q_{n}$, the
Cauchy-Schwarz inequality, the trace inequality (\ref{trace}),
and (\ref{u-q0u}) that
\begin{eqnarray*}
&&\sum_{T\in\T_h} h_T^{-1}|\langle (\nabla
Q_0w)\cdot\bn_e-Q_{n}(\nabla w\cdot\bn_e),\; \nabla v_0\cdot\bn_e
-v_{n}\rangle_\pT|\\
&&=\sum_{T\in\T_h} h_T^{-1}|\langle (\nabla Q_0w)\cdot\bn_e-\nabla
w\cdot\bn_e,\; \nabla v_0\cdot\bn_e -v_{n}\rangle_\pT|\\
&&\le \left(\sum_{T\in\T_h}h_T^{-1}\|(\nabla Q_0 w-\nabla w)\cdot\bn_e\|_\pT^2\right)^{\frac12}
\left(\sum_{T\in\T_h}h_T^{-1}\|\nabla v_0\cdot\bn_e-v_{n}\|^2_{\pT}\right)^{\frac12}\\
&& \le C\left(\sum_{T\in\T_h}(h_T^{-2}\|\nabla Q_0 w-\nabla w\|_T^2
+ \|\nabla Q_0 w-\nabla w\|_{1,T}^2)\right)^{\frac12}\3bar v\3bar\\
&&\le C h^{k+1}\|w\|_{k+3} \3bar v\3bar.
\end{eqnarray*}
 This completes the proof.
\end{proof}

\section{An Error Equation}

We first derive an equation that the projection of the exact
solution, $Q_hu$, shall satisfy. Using (\ref{dwl}), the
integration by parts, and (\ref{key-Laplacian}), we obtain
\begin{eqnarray*}
& & (\Delta_{w} Q_h u, \Delta_w v)_T \\
&=& (v_0, \Delta(\Delta_w Q_h
u))_T + \langle
(v_{n}\bn_e)\cdot\bn,\ \Delta_w Q_h u\rangle_{\pT}-\langle v_0, \nabla(\Delta_w Q_h u)\cdot\bn \rangle_{\pT}\nonumber\\
&=&(\Delta v_0,\ \Delta_w Q_h u)_T+\langle v_0, \nabla(\Delta_w
Q_h u)\cdot\bn\rangle_{\pT}-\langle\nabla v_0\cdot\bn, \Delta_w Q_h
u\rangle_{\pT}\nonumber\\
&& +\langle
(v_{n}\bn_e)\cdot\bn, \Delta_w Q_h u\rangle_{\pT}-\langle v_0, \nabla(\Delta_w Q_h u)\cdot\bn \rangle_{\pT}\nonumber\\
&=&(\Delta v_0,\Delta_w Q_h u)_T-\langle (\nabla v_0-v_{n}\bn_e)\cdot\bn,\Delta_w Q_h
u\rangle_{\pT}\nonumber\\
&=&(\Delta v_0,\bbQ_h\Delta u)_T-\langle (\nabla v_0-v_{n}\bn_e)\cdot\bn,
\bbQ_h\Delta u\rangle_{\pT}\nonumber\\
&=&(\Delta u,\Delta v_0)_T -\langle (\nabla v_0-v_{n}\bn_e)\cdot\bn,
\bbQ_h \Delta u\rangle_{\pT},
\end{eqnarray*}
which implies that
\begin{eqnarray}
(\Delta u,\Delta v_0)_T&=&(\Delta_{w} Q_h u, \Delta_w v)_T
+\langle (\nabla v_0-v_{n}\bn_e)\cdot\bn, \bbQ_h \Delta
u\rangle_{\pT}.\label{key}
\end{eqnarray}
Next, it follows from the integration by parts that
$$
(\Delta u,\Delta v_0)_T = (\Delta^2u, v_0)_T+\langle \Delta u,
\nabla v_0\cdot\bn\rangle_{\pT} - \langle\nabla(\Delta u)\cdot\bn,
v_0\rangle_{\pT}.
$$
Summing over all $T$ and then using the identity $(\Delta^2u,
v_0)=(f,v_0)$ we arrive at
\begin{eqnarray*}
\sum_{T\in\T_h}(\Delta u,\Delta v_0)_T& =&(f,v_0) +\sum_{T\in\T_h}\langle \Delta u,
\nabla v_0\cdot\bn\rangle_{\pT}\\
& =&(f,v_0) +\sum_{T\in\T_h}\langle \Delta u,
(\nabla v_0-v_{n}\bn_e)\cdot\bn\rangle_{\pT}.
\end{eqnarray*}
Combining the above equation with
(\ref{key}) leads to
\begin{eqnarray}\label{w-e}
(\Delta_w Q_h u, \Delta_w v)_h&=&(f,v_0)+\sum_{T\in\T_h}\langle
\Delta u-\bbQ_h\Delta u, (\nabla v_0-v_{n}\bn_e)\cdot\bn\rangle_{\pT}.
\end{eqnarray}
Define the error between the finite element approximation $u_h$ and the
projection of the exact solution $u$ as
$$
e_h:=\{e_0, \ e_{n}\bn_e\}=\{Q_0u-u_0,\
(Q_{n}(\nabla u\cdot\bn_e)-u_{n})\bn_e\}.
$$
Taking the difference of (\ref{w-e}) and (\ref{wg}) gives the
   following error equation
\begin{eqnarray}
  (\Delta_w e_h, \Delta_w v)_h+s(e_h,v)&=&\sum_{T\in\T_h}\langle
  \Delta u-\bbQ_h\Delta u,
  (\nabla v_0-v_{n}\bn_e)\cdot\bn\rangle_{\pT}\label{ee}  \\
   & +&s(Q_hu,v) \quad\quad\forall v\in V_h^0.\nonumber
\end{eqnarray}
Observe that the definition of
   the stabilization term $s(\cdot,\cdot)$ indicates that
\begin{eqnarray*}
s(Q_hu,v) &=&\sum_{T\in\T_h} h_T^{-1}\langle (\nabla
   Q_0u)\cdot\bn_e-Q_{n}(\nabla
   u\cdot\bn_e), \ \nabla v_0\cdot\bn_e-v_{n}\rangle_\pT.
\end{eqnarray*}

\section{Error Estimates}
First, we derive an estimate for the error function $e_h$ in the
natural triple-bar norm, which can be viewed as a discrete $H^2$-norm.

\begin{theorem} Let $u_h\in V_h$  be the weak Galerkin finite element
   solution arising from
   (\ref{wg}) with finite element functions of order $k+2\ge 2$. Assume
   that the exact solution of (\ref{pde})-(\ref{bc-n} ) is
   regular such that $u\in H^{k+3}(\Omega)$. Then, there exists a
   constant $C$ such that
\begin{equation}\label{err1}
   \3bar u_h-Q_hu\3bar \le Ch^{k+1}\|u\|_{k+3}.
\end{equation}
\end{theorem}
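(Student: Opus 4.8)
The plan is to test the error equation (\ref{ee}) with the natural choice $v=e_h\in V_h^0$, which makes the left-hand side exactly $\3bar e_h\3bar^2$. This reduces the theorem to bounding the two terms on the right-hand side of (\ref{ee}) when $v=e_h$. For the first term, $\sum_{T\in\T_h}\langle \Delta u-\bbQ_h\Delta u,\ (\nabla e_0-e_{n}\bn_e)\cdot\bn\rangle_\pT$, I would invoke estimate (\ref{mmm1}) of Lemma \ref{l2} directly with $v=e_h$ and $w=u$, giving a bound $Ch^{k+1}\|u\|_{k+3}\3bar e_h\3bar$. For the second term $s(Q_hu,e_h)$, I would use the rewriting of $s(Q_hu,\cdot)$ recorded just after (\ref{ee}) — namely $s(Q_hu,e_h)=\sum_{T\in\T_h}h_T^{-1}\langle (\nabla Q_0u)\cdot\bn_e-Q_n(\nabla u\cdot\bn_e),\ \nabla e_0\cdot\bn_e-e_n\rangle_\pT$ — and apply estimate (\ref{mmm3}) of Lemma \ref{l2}, again with $w=u$ and $v=e_h$, obtaining the same type of bound $Ch^{k+1}\|u\|_{k+3}\3bar e_h\3bar$.

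Combining these two bounds yields
\[
\3bar e_h\3bar^2 \le Ch^{k+1}\|u\|_{k+3}\3bar e_h\3bar,
\]
and dividing through by $\3bar e_h\3bar$ (trivial if it vanishes) gives $\3bar e_h\3bar\le Ch^{k+1}\|u\|_{k+3}$. Since $e_h=Q_hu-u_h$ by definition and $\3bar\cdot\3bar$ is the triple-bar seminorm in which (\ref{err1}) is stated, this is exactly the claimed estimate, up to the harmless sign $e_h=-(u_h-Q_hu)$ which does not affect the seminorm.

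One technical point worth checking is that $e_h$ is a legitimate test function in $V_h^0$: this requires that $e_0=Q_0u-u_0$ and $e_n=Q_n(\nabla u\cdot\bn_e)-u_n$ vanish on $\partial\Omega$, which holds because the scheme imposes $u_0=Q_bg$, $u_n=(\bn\cdot\bn_e)Q_n\phi$ on the boundary while $Q_0u$, $Q_n(\nabla u\cdot\bn_e)$ match the same boundary data (as $g=u|_{\partial\Omega}$, $\phi=\partial u/\partial n$ and $Q_0$ preserves edge/face traces through its Scott--Zhang construction), so the boundary contributions cancel. Given that Lemma \ref{l2} already packages the two delicate estimates — including the interpolation bound (\ref{u-q0u}) and the trace inequality (\ref{trace}) — there is no real obstacle here; the only thing to get right is the bookkeeping that the right-hand side of the error equation is precisely matched to the two inequalities of Lemma \ref{l2} after the substitution $v=e_h$.
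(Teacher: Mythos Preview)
Your proposal is correct and follows essentially the same route as the paper: set $v=e_h$ in the error equation (\ref{ee}), then apply the two estimates (\ref{mmm1}) and (\ref{mmm3}) of Lemma~\ref{l2} to bound the right-hand side by $Ch^{k+1}\|u\|_{k+3}\3bar e_h\3bar$, and divide. Your added remark about verifying $e_h\in V_h^0$ is a reasonable point of care that the paper leaves implicit.
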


\begin{proof}
By letting $v=e_h$ in the error equation (\ref{ee}), we obtain the
  following identity
\begin{eqnarray*}
  \3bar e_h\3bar^2&=&\sum_{T\in\T_h}\langle
  \Delta u-\bbQ_h\Delta u,
  (\nabla e_0-e_{n}\bn_e)\cdot\bn\rangle_{\pT}\nonumber\\
  &+&\sum_{T\in\T_h} h_T^{-1}\langle (\nabla Q_0u\cdot\bn_e-Q_{n}(\nabla
  u\cdot\bn_e), \ \nabla e_0\cdot\bn_e-e_{n}\rangle_\pT.
\end{eqnarray*}
Using the estimates of Lemma \ref{l2}, we arrive at
\[
\3bar e_h\3bar^2 \le
Ch^{k+1}\|u\|_{k+3}\3bar
e_h\3bar,
\]
which implies (\ref{err1}). This completes the proof of the theorem.
\end{proof}

Next, we would like to provide an estimate for the standard
$L^2$ norm of the first component of the error function $e_h$.
Let us consider the following dual problem
\begin{eqnarray}
\Delta^2w&=& e_0\quad
\mbox{in}\;\Omega,\label{dual1}\\
w&=&0,\quad\mbox{on}\;\partial\Omega,\label{dual2}\\
\nabla w\cdot\bn&=&0\quad\mbox{on}\;\partial\Omega.\label{dual3}
\end{eqnarray}
The $H^{4}$ regularity assumption of the dual problem implies the
existence of a constant $C$ such that
\begin{equation}\label{reg}
\|w\|_4\le C\|e_0\|.
\end{equation}

\begin{theorem}
Let $u_h\in V_h$ be the weak Galerkin finite element solution
  arising from (\ref{wg}) with finite element functions of order
  $k+2\ge 3$. Assume that the exact solution of
  (\ref{pde})-(\ref{bc-n}) is  regular such that $u\in
  H^{k+3}(\Omega)$ and the dual problem (\ref{dual1})-(\ref{dual3}) has
  the $H^4$ regularity. Then, there exists a constant $C$ such that
\begin{equation}\label{err2}
  \|Q_0u-u_0\| \le Ch^{k+3}\|u\|_{k+3}.
\end{equation}
\end{theorem}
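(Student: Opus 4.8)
The plan is to use a standard duality (Aubin--Nitsche) argument. First I would test the dual problem against the error. Letting $w$ solve the dual problem \eqref{dual1}--\eqref{dual3} and $Q_h w = \{Q_0 w, (Q_n(\nabla w\cdot\bn_e))\bn_e\}$ be its interpolant, I would start from $\|e_0\|^2 = (e_0, \Delta^2 w) = (\Delta w, \Delta e_0)$ after integration by parts, using the homogeneous boundary conditions on $w$ together with the fact that $e_0 = Q_0 u - u_0$ inherits the right boundary behavior (indeed $e_0$ vanishes on $\partial\Omega$ since both $Q_0 u$ and $u_0$ match the boundary data there, and the normal-derivative mismatch is controlled by the stabilizer). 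The key is then to recognize $\sum_T (\Delta w, \Delta e_0)_T$ via the identity \eqref{key} applied with the roles of $u$ and $v$ suitably adjusted: replacing $u$ by $w$ and $v$ by $e_h$ gives
\[
\sum_{T}(\Delta w, \Delta e_0)_T = (\Delta_w Q_h w, \Delta_w e_h)_h + \sum_T \langle (\nabla e_0 - e_n\bn_e)\cdot\bn, \bbQ_h\Delta w\rangle_{\pT}.
\]

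Next I would symmetrize using the error equation \eqref{ee}. Taking $v = Q_h w$ in \eqref{ee} — after a density/trace argument confirming $Q_h w \in V_h^0$ because $w$ and $\nabla w\cdot\bn$ vanish on $\partial\Omega$ — produces an expression for $(\Delta_w e_h, \Delta_w Q_h w)_h$. Combining this with the displayed identity for $\sum_T(\Delta w, \Delta e_0)_T$, and adding and subtracting the stabilizer term $s(e_h, Q_h w)$, I expect the principal bilinear-form contributions to cancel, leaving a sum of consistency-error terms: (i) a term of the form $\sum_T\langle \Delta u - \bbQ_h\Delta u, (\nabla (Q_0 w)_0 - \ldots)\cdot\bn\rangle_{\pT}$ coming from the right-hand side of \eqref{ee} evaluated at $Q_h w$; (ii) the term $\sum_T\langle(\nabla e_0 - e_n\bn_e)\cdot\bn, \bbQ_h\Delta w\rangle_{\pT}$ from above, which I would rewrite as $\sum_T\langle(\nabla e_0 - e_n\bn_e)\cdot\bn, \bbQ_h\Delta w - \Delta w\rangle_{\pT}$ using $\sum_T\langle(\nabla e_0 - e_n\bn_e)\cdot\bn, \Delta w\rangle = 0$ — this last identity needs the continuity of $e_0$ across edges and the homogeneous boundary data, so the jump terms telescope; and (iii) the stabilizer cross-term $s(Q_h u, Q_h w)$.

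Then I would estimate each residual term. For (i) and (ii) I would apply Lemma~\ref{l2}-type estimates: the factor involving $w$ contributes $Ch\|w\|_4 \le Ch\|e_0\|$ by the trace inequality \eqref{trace} and the $L^2$-projection approximation bound on $\bbQ_h\Delta w$, while the factor involving $e_h$ contributes either $\3bar e_h\3bar \le Ch^{k+1}\|u\|_{k+3}$ directly from Theorem~1, or $Ch^{k+1}\|u\|_{k+3}$ after using \eqref{mmm3}. For (iii), I would bound $s(Q_h u, Q_h w)^{1/2}$ in each argument by the estimate from \eqref{mmm3}, giving $Ch^{k+1}\|u\|_{k+3}$ and $Ch\|w\|_4 \le Ch\|e_0\|$ respectively. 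Collecting everything yields $\|e_0\|^2 \le Ch^{k+1}\|u\|_{k+3} \cdot h\|e_0\| = Ch^{k+2}\|u\|_{k+3}\|e_0\|$, hence $\|e_0\| \le Ch^{k+2}\|u\|_{k+3}$. Since the claimed bound \eqref{err2} is $Ch^{k+3}\|u\|_{k+3}$, one further power of $h$ must be extracted: the source of this gain is that the weak-Laplacian consistency is actually one order better than the crude trace estimate suggests, because $\bbQ_h$ is the $L^2$ projection onto $P_k(T)$ and $\Delta w \in H^2$, so $\|\Delta w - \bbQ_h\Delta w\|_T \le Ch^2\|w\|_4$ rather than $Ch\|w\|_4$; I would redo the estimates of terms (i) and (ii) with this sharper bound. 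The main obstacle I anticipate is precisely this bookkeeping — ensuring the correct power of $h$ is attached to the dual factor in every consistency term, and in particular verifying that the vanishing-jump identities used to replace $\Delta w$ by $\Delta w - \bbQ_h\Delta w$ (and similar) are legitimate given only $C^0$ continuity of $e_0$ and the homogeneous data, rather than full $H^2$ conformity.
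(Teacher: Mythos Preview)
Your overall strategy is the paper's: duality via the auxiliary problem, identity \eqref{key} with $w$ in place of $u$, and the error equation \eqref{ee} with $v=Q_hw$. But two of your intermediate claims are false, and it is only an accident that they cancel.

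First, $\|e_0\|^2=(e_0,\Delta^2 w)\ne\sum_T(\Delta w,\Delta e_0)_T$. Since $e_0$ is merely $C^0$, integrating by parts element-by-element leaves the surface term $-\sum_T\langle\Delta w,\nabla e_0\cdot\bn\rangle_{\pT}$, which does \emph{not} vanish: the normal derivative $\nabla e_0\cdot\bn$ jumps across interior edges. Second, your claimed identity $\sum_T\langle(\nabla e_0-e_n\bn_e)\cdot\bn,\Delta w\rangle_{\pT}=0$ is wrong for the same reason: only the $e_n$ part telescopes (because $e_n$ is single-valued on each face and vanishes on $\partial\Omega$), not the $\nabla e_0\cdot\bn$ part. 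What actually happens in the paper is that the surviving surface term from the first integration by parts is rewritten as $-\sum_T\langle\Delta w,(\nabla e_0-e_n\bn_e)\cdot\bn\rangle_{\pT}$ using precisely that $e_n$-cancellation, and this then combines with the $\bbQ_h\Delta w$ term coming from \eqref{key} to produce $-\sum_T\langle\Delta w-\bbQ_h\Delta w,(\nabla e_0-e_n\bn_e)\cdot\bn\rangle_{\pT}$ directly---no separate ``jump vanishing'' argument is needed or available.

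Once the identity is derived correctly, the power counting is immediate and there is no need for your two-pass repair. Lemma~\ref{l2} applied with $w\in H^4$ (effectively $k=1$ there, admissible since $k+2\ge 3$ guarantees $\bbQ_h$ projects onto at least $P_1$) already gives the factor $Ch^2\|w\|_4$ for the terms paired with $\3bar e_h\3bar$; combined with \eqref{err1} this yields $Ch^{k+3}\|u\|_{k+3}\|w\|_4$. The remaining two terms, $\sum_T\langle\Delta u-\bbQ_h\Delta u,(\nabla Q_0w-Q_n(\nabla w\cdot\bn_e)\bn_e)\cdot\bn\rangle_{\pT}$ and $s(Q_hu,Q_hw)$, are \emph{not} of the form in Lemma~\ref{l2} (the second factor is an interpolation error, not bounded by a triple-bar norm), so they must be estimated directly by Cauchy--Schwarz, the trace inequality \eqref{trace}, and the approximation bounds \eqref{u-q0u} for both $u$ and $w$; each splits as $Ch^{k+1}\|u\|_{k+3}\cdot Ch^2\|w\|_4$.
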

\begin{proof}
Testing (\ref{dual1}) by error function $e_0$  and then using
the integration by parts gives
\begin{eqnarray*}
\|e_0\|^2&=&(\Delta^2 w, e_0)\\
   &=&\sum_{T\in\T_h}(\Delta w,\Delta e_0)_T-
     \sum_{T\in\T_h}\l\Delta w,\nabla e_0\cdot\bn\r_\pT\\
   &=&\sum_{T\in\T_h}(\Delta w,\Delta e_0)_T
     -\sum_{T\in\T_h}\l\Delta w,(\nabla
    e_0-e_{n}\bn_e)\cdot\bn\r_\pT.
\end{eqnarray*}
 Using (\ref{key}) with $w$ in the place of
    $u$, we can rewrite the above equation as follows
\begin{eqnarray*}
   \|e_0\|^2&=&(\Delta_w Q_hw,\Delta_w e_h)_h
      -\sum_{T\in\T_h}\l\Delta w-\bbQ_h\Delta w,(\nabla
    e_0-e_{n}\bn_e)\cdot\bn\r_\pT.
\end{eqnarray*}
It now follows from the error equation (\ref{ee}) that
\begin{eqnarray*}
   ( \Delta_w Q_hw, \Delta_{w} e_h)_h&=&\sum_{T\in\T_h}\langle
   \Delta u-\bbQ_h\Delta u,
   (\nabla Q_0w-Q_{n}(\nabla w\cdot\bn_e)\bn_e)\cdot\bn\rangle_{\pT}\\
   &-&s(e_h,Q_hw)+s(Q_hu,Q_hw).
\end{eqnarray*}
Combining the two equations above gives
\begin{eqnarray}
    \|e_0\|^2&=&-\sum_{T\in\T_h}
    \langle\Delta w-\bbQ_h\Delta w,(\nabla e_0-e_{n}\bn_e)
     \cdot\bn\r_\pT \label{lll} \\
  & & + \sum_{T\in\T_h}\langle\Delta u-\bbQ_h\Delta u,
   (\nabla Q_0w-Q_{n}(\nabla w\cdot\bn_e)\bn_e)\cdot\bn\rangle_{\pT}
    \nonumber\\
  & & -s(e_h,Q_hw)+ s(Q_hu,Q_hw).\nonumber
\end{eqnarray}
Using the estimates of Lemma \ref{l2}, we can
  bound  two terms on the right-hand side of the equation
  above as follows
\begin{eqnarray*}
  \sum_{T\in\T_h}| \l\Delta w-\bbQ_h\Delta w,(\nabla e_0-e_{n}\bn_e)
         \cdot\bn\r_\pT | &\le&Ch^2\|w\|_4\3bar e_h\3bar,\\
        |s(e_h,Q_hw)|&\le&Ch^2\|w\|_4\3bar e_h\3bar.
\end{eqnarray*}
It follows from (\ref{ne}) and the definition of $Q_{n}$ and $Q_0$ that
\begin{eqnarray}
\|(\nabla Q_0w-Q_{n}(\nabla w\cdot\bn_e)\bn_e)\cdot\bn_e\|_\pT&=&\|(\nabla Q_0w-Q_{n}(\nabla w\cdot\bn_e)\bn_e)\cdot\bn\|_\pT\label{tt1}\\
=\|\nabla Q_0w\cdot\bn-Q_{n}(\nabla w\cdot\bn)\|_\pT
&\le& \|\nabla Q_0w\cdot\bn-\nabla w\cdot\bn\|_\pT\nonumber\\
+\|\nabla w\cdot\bn-Q_{n}(\nabla w\cdot\bn)\|_\pT
&\le&C\|\nabla Q_0w\cdot\bn-\nabla w\cdot\bn\|_\pT.\nonumber
\end{eqnarray}
Using (\ref{tt1}) and (\ref{trace}), we have
\begin{eqnarray*}
&&\sum_{T\in\T_h}\langle\Delta u-\bbQ_h\Delta u, (\nabla Q_0w-Q_{n}(\nabla w\cdot\bn_e)\bn_e)\cdot\bn\rangle_{\pT}\\
&&\le C\left(\sum_{T\in\T_h}h\|\Delta u-\bbQ_h\Delta u\|_\pT^2\right)^{1/2}\left(\sum_{T\in\T_h}h^{-1}\|(\nabla Q_0w-\nabla w)\cdot\bn\|^2_{\pT}\right)^{1/2}\\
&&\le C\left(\sum_{T\in\T_h}\left(\|\Delta u-\bbQ_h\Delta
u\|_T^2+h_T^2\|\nabla(\Delta u-\bbQ_h\Delta u)
\|_T^2\right)\right)^{\frac12}\cdot\\
&&\left(\sum_{T\in\T_h}\left(h^{-2}\|\nabla Q_0w- \nabla
w\|_T^2+\|\nabla(\nabla Q_0w- \nabla w)\|_T^2\right)\right)^{\frac12}\\
&&\le Ch^{k+3}\|u\|_{k+3}\|w\|_4.
\end{eqnarray*}
Using (\ref{tt1}) and  (\ref{trace}), we have
\begin{eqnarray*}
s(Q_hu, Q_hw)&=&\sum_{T\in\T_h} h^{-1}\langle \nabla ( Q_0u)\cdot\bn_e-Q_{n}(\nabla u\cdot\bn_e), \ \nabla (Q_0w)\cdot\bn_e-Q_{n}(\nabla w\cdot\bn_e)\rangle_\pT \\
&&\le \left(\sum_{T\in\T_h}h^{-1}\|\nabla Q_0u-\nabla u)\cdot\bn\|_\pT^2\right)^{1/2}\left(\sum_{T\in\T_h}h^{-1}\|\nabla Q_0w-\nabla w)\cdot\bn\|^2_{\pT}\right)^{1/2}\\
&\le&Ch^{k+3}\|u\|_{k+3}\|w\|_4.
\end{eqnarray*}
Substituting all above estimates into (\ref{lll}) and using (\ref{err1}) give
\[
\|e_0\|^2\le Ch^{k+3}\|u\|_{k+3}\|w\|_4.
\]
Combining the above estimate with (\ref{reg}), we obtain the desired $L^2$ error estimate (\ref{err2}).
\end{proof}

\section{Numerical Experiments}
This section shall report some numerical results for
   the $C^0$ weak Galerkin finite element methods for the following
   biharmonic equation:
\begin{eqnarray}
  \Delta^2 u&=&f \quad \mbox{ in } \Omega,\label{NumEx_problem}\\
  u&=&g\quad \mbox{ on } \partial\Omega,\label{NumEx_bd1}\\
  \frac{\partial u}{\partial n}&=&\psi\quad \mbox{ on }  \partial\Omega.
   \label{NumEx_bd2}
\end{eqnarray}
For simplicity, all the numerical experiments are conducted
   by using $k=0$ or $k=1$ in the finite element space $V_h$ in \eqref{Vh}.

If $\phi\in P_0(T)$ (i.e. $k=0$), the above equation can be simplified as
\begin{eqnarray*}
(\Delta_w v,\phi)_T=\langle v_n{\bf n}_e\cdot{\bf n},\ \phi\rangle_{\partial T}.
\end{eqnarray*}

The error for the $C^0$-WG solution will be measured in four norms
  defined as follows:
\a{
\intertext{$H^1$ semi-norm: }
  \|v-v_0\|_1^2
   &=\sum_{T\in\mathcal{T}_h}\int_T |\nabla v-\nabla v_0|^2dx.\\
\intertext{Discrete $H^2$ norm: }
  \3bar v\3bar^2
   &=\sum_{T\in\mathcal{T}_h}\|\Delta_w v\|^2_T
   +\sum_{T\in\mathcal{T}_h}h^{-1} \|\nabla v_0\cdot{\bf n}_e-v_n\|_\pT^2,\\
\intertext{Element-based $L^2$ norm: }
   \|Q_0v-v_0\|^2&=\sum_{T\in\mathcal{T}_h}\int_T|Q_0v-v_0|^2dx,\\
\intertext{Edge-based $L^2$ norm: }
  \|Q_n(\nabla v\cdot{\bf n}_e)-v_n\|_b^2
  &=\sum_{e\in\mathcal{E}_h}h\int_e |Q_n(\nabla v\cdot{\bf n}_e)-v_n|^2ds.
  }

\subsection{Example 1}
Consider the biharmonic problem (\ref{NumEx_problem})-(\ref{NumEx_bd2}) in the square domain $\Omega=(0,1)^2.$ Set the exact solution by
\begin{eqnarray*}
u=x^2(1-x)^2y^2(1-y)^2.
\end{eqnarray*}

\begin{table}[ht]
\caption{Example 1. Convergence rate for element $P_2(T)-P_1(e)$ $(k=0)$.}\label{ex1_tri}
\center
\begin{tabular}{||c||c|c|c|c||}
\hline\hline
$h$ & $\|u-u_0\|_1$ &$\3bar u_h-Q_h u\3bar$ & $\|u_0-Q_0 u\|$ & $\|Q_n(\nabla u\cdot{\bf n}_e)-u_n\|_b$ \\
\hline\hline
   1/4    &6.8858e-03   &6.0250e-02   &1.4563e-03   &4.3364e-03\\ \hline
   1/8    &1.7465e-03   &3.0867e-02   &3.8153e-04   &1.4617e-03\\ \hline
   1/16   &4.3885e-04   &1.5555e-02   &9.6991e-05   &4.0941e-04\\ \hline
   1/32   &1.0982e-04   &7.7916e-03   &2.4350e-05   &1.0558e-04\\ \hline
   1/64   &2.7458e-05   &3.8972e-03   &6.0931e-06   &2.6601e-05\\ \hline
   1/128  &6.8645e-06   &1.9487e-03   &1.5236e-06   &6.6629e-06\\ \hline
 \mbox{Conv.Rate}&1.9949  &9.9160e-01  &1.9829   &1.8865 \\ \hline
\hline
\end{tabular}
\end{table}

It is easy to check that
$$u|_{\partial\Omega}=0,\  \frac{\partial u}{\partial n}=0.$$
The function $f$ is given according to the equation (\ref{NumEx_problem}).

The test is performed by using uniform triangular mesh.
The mesh is constructed as follows:
    1) partition the domain into $n\times n$ sub-rectangles;
   2) divide each square element into two triangles by the diagonal
     line with a negative slope.
The mesh size is denoted by $h=1/n.$ Table \ref{ex1_tri}
   shows the convergence rate for $C^0$-WG solutions based on
    $k=0$ in four norms respectively.
The numerical results indicate that the WG solution is convergent with
    rate $O(h^2)$ in $H^1$, $O(h^1)$ in $H^2$, and $O(h^2)$ in $L^2$ norms.
The convergence rate for $\|Q_n(\nabla u\cdot{\bf n}_e)-u_n\|_b$
   is $O(h^2)$.
Also, the same problem is tested for $k=1$.
   The results are reported in Table \ref{ex1_tri1}.
It indicates that the WG solution is convergent with rate
   $O(h^3)$ in $H^1$, $O(h^2)$ in $H^2$, and $O(h^4)$ in $L^2$ norms.
We note that the $L^2$ error is convergent at order $4$, two orders higher
   than that of $k=0$, confirming the sharpness of Theorem \ref{err2}.
 Moreover the convergence rate for $\|Q_n(\nabla u\cdot{\bf n}_e)-u_n\|_b$
    is $O(h^3)$, for $k=1$.

\begin{table}[ht]
\caption{Example 1. Convergence rate element $P_3(T)-P_2(e)$ $(k=1)$}\label{ex1_tri1}
\center
\begin{tabular}{||c||c|c|c|c||}
\hline\hline
$h$ & $\|u-u_0\|_1$ &$\3bar u_h-Q_h u\3bar$ & $\|u_0-Q_0 u\|$
   & $\|Q_n(\nabla u\cdot{\bf n}_e)-u_n\|_b$ \\
\hline\hline
   1/4    &1.5888e-03   &1.5888e-02   &1.5751e-04   &1.7898e-03\\ \hline
   1/8    &2.6787e-04   &4.7921e-03   &1.3887e-05   &2.6200e-04\\ \hline
   1/16   &3.8354e-05   &1.2963e-03   &1.0006e-06   &3.4742e-05\\ \hline
   1/32   &5.0893e-06   &3.3568e-04   &6.6590e-08   &4.4563e-06\\ \hline
   1/64   &6.5373e-07   &8.5314e-05   &4.2842e-09   &5.6344e-07\\ \hline
   1/128  &8.2783e-08   &2.1499e-05   &2.7341e-10   &7.0798e-08\\ \hline
 \mbox{Conv.Rate}&2.8597  &1.9152  &3.8450   &2.9336 \\ \hline
\hline
\end{tabular}
\end{table}

\subsection{Example 2}
In this problem, we set $\Omega=(0,1)^2$ and the exact solution:
$$u=\sin(\pi x)\sin(\pi y),$$
with
$$u|_{\partial\Omega}=0, \ \frac{\partial u}{\partial n}\neq 0.$$
Boundary conditions and $f$ are given according to the equation
 (\ref{NumEx_problem})-(\ref{NumEx_bd2}).

Again, the uniform triangular mesh is used in the experiment.
Table \ref{ex2_tri} shows that the convergence rate
  for $C^0$-WG solutions in $H^1$, $H^2$ and $L^2$ norms is $O(h^2)$,
    $O(h)$ and $O(h^2)$, respectively.

\begin{table}[ht]
\caption{Example 2. Convergence rate for element $P_2(T)-P_1(e)$ $(k=0)$.}\label{ex2_tri}
\center
\begin{tabular}{||c||c|c|c|c||}
\hline\hline
$h$ & $\|u-u_0\|_1$ &$\3bar u_h-Q_h u\3bar$ & $\|u_0-Q_0 u\|$
  & $\|Q_n(\nabla u\cdot{\bf n}_e)-u_n\|_b$ \\
\hline\hline
   1/4     &6.1653e-01   &5.5381       &1.2978e-01   &2.7515e-01\\ \hline
   1/8     &1.4737e-01   &2.7431       &3.2219e-02   &6.8563e-02\\ \hline
   1/16    &3.6122e-02   &1.3640       &7.9854e-03   &1.6489e-02\\ \hline
   1/32    &8.9758e-03   &6.8082e-01   &1.9899e-03   &4.0589e-03\\ \hline
   1/64    &2.2403e-03   &3.4024e-01   &4.9703e-04   &1.0102e-03\\ \hline
   1/128   &5.5983e-04   &1.7010e-01   &1.2423e-04   &2.5224e-04\\ \hline
 \mbox{Conv.Rate}&2.0186   &1.0046   &2.0058   &2.0209\\ \hline
\hline
\end{tabular}
\end{table}

\subsection{Example 3}
The exact solution is chosen as
$$u=\sin(\pi x)\cos(\pi y),$$
with nonhomogeneous boundary conditions.

Table \ref{ex3_tri} shows that the convergence rate for
 $C^0$-WG solutions in $H^1$, $H^2$ and $L^2$ norms is $O(h^2)$,
  $O(h)$, and $O(h^2)$, respectively.

\begin{table}[ht]
\caption{Example 3. Convergence rate for element $P_2(T)-P_1(e)$ $(k=0)$.}\label{ex3_tri}
\center
\begin{tabular}{||c||c|c|c|c||}
\hline\hline
$h$ & $\|u-u_0\|_1$ &$\3bar u_h-Q_h u\3bar$ & $\|u_0-Q_0 u\|$
   & $\|Q_n(\nabla u\cdot{\bf n}_e)-u_n\|_b$ \\
\hline\hline
   1/4     &2.7134e-01   &4.3389       &2.8817e-02   &5.9389e-01\\ \hline
   1/8     &5.6175e-02   &2.4888       &5.8917e-03   &2.0490e-01\\ \hline
   1/16    &1.3236e-02   &1.3196       &1.3285e-03   &5.9347e-02\\ \hline
   1/32    &3.2856e-03   &6.7374e-01   &3.2089e-04   &1.5585e-02\\ \hline
   1/64    &8.2159e-04   &3.3917e-01   &7.9441e-05   &3.9554e-03\\ \hline
   1/128   &2.0553e-04   &1.6994e-01   &1.9812e-05   &9.9329e-04\\ \hline
 \mbox{Conv.Rate}&2.0608   &9.4191e-01   &2.0916   &1.8609\\ \hline
\hline
\end{tabular}
\end{table}

\subsection{Example 4}
In the final example, we test the a case where the exact solution
has a low regularity in the domain $\Omega=(0,1)^2$. The exact
solution is given by
$$
u=r^{3/2}\bigg(\sin\frac{3\theta}{2}-3\sin\frac{\theta}{2}\bigg),
$$
where $(r,\theta)$ are the polar coordinates. It is known that $u\in
H^{2.5}(\Omega)$. The performance for $C^0$ weak Galerkin finite
element approximations for element $P_2(T)-P_1(e)$ $(k=0)$ is
reported in Table \ref{ex4_tri}. The convergence rates in
$H^1$-norm, $H^2-$norm, and edge-based $L^2$-norm are seen as
$O(h^{1.4})$, $O(h^{0.47})$, and $O(h^{1.4})$. The corresponding
theoretical prediction has the order of $O(h^{1.5})$, $O(h^{0.5})$,
and $O(h^{1.5})$. We believe that the numerical results are in
consistency with the theory. Table \ref{ex4_tri} indicates that the
numerical convergence rate in the standard $L^2$ is of order
$O(h^{1.88})$, which exceeds the theoretical prediction of
$O(h^{1.5})$.

Table \ref{ex4_tri0} contains some numerical results for the weak
Galerkin element $P_3(T)-P_2(e)$ $(k=1)$. The convergence rates in
$H^1$-norm, $H^2-$norm, and edge-based $L^2$-norm are seen as
$O(h^{1.5})$, $O(h^{0.5})$, and $O(h^{1.5})$, which are completely
in consistency with the theory. For the element-based $L^2$ error,
Table \ref{ex4_tri0} indicates a numerical convergence rate of order
$O(h^{2.49})$, which is also consistent with the theoretical
prediction of $O(h^{2.5})$.

\begin{table}[ht]
\caption{Example 4. Convergence rate for element $P_2(T)-P_1(e)$ $(k=0)$.}\label{ex4_tri}
\center
\begin{tabular}{||c||c|c|c|c||}
\hline\hline
$h$ & $\|u-u_0\|_1$ &$\3bar u_h-Q_h u\3bar$ & $\|u_0-Q_0 u\|$
   & $\|Q_n(\nabla u\cdot{\bf n}_e)-u_n\|_b$ \\
\hline\hline
   1/4     &3.1965e-02   &9.0667e-01   &3.3386e-03   &1.5615e-01\\ \hline
   1/8     &1.3596e-02   &6.8589e-01   &1.1209e-03   &6.2562e-02\\ \hline
   1/16    &5.1368e-03   &4.9952e-01   &3.1392e-04   &2.3370e-02\\ \hline
   1/32    &1.8697e-03   &3.5808e-01   &8.2158e-05   &8.4733e-03\\ \hline
   1/64    &6.7020e-04   &2.5488e-01   &2.0925e-05   &3.0321e-03\\ \hline
   1/128   &2.3855e-04   &1.8081e-01   &5.2718e-06   &1.0784e-03\\ \hline
 \mbox{Conv.Rate}&1.4233   &4.6844e-01   &1.8767   &1.4415\\ \hline
\hline
\end{tabular}
\end{table}

\begin{table}[ht]
\caption{Example 4. Convergence rate for element $P_3(T)-P_2(e)$ $(k=1)$.}\label{ex4_tri0}
\center
\begin{tabular}{||c||c|c|c|c||}
\hline\hline
$h$ & $\|u-u_0\|_1$ &$\3bar u_h-Q_h u\3bar$ & $\|u_0-Q_0 u\|$
   & $\|Q_n(\nabla u\cdot{\bf n}_e)-u_n\|_b$ \\
\hline\hline
   1/4     &2.5197e-02   &5.0303e-01   &1.3671e-03   &4.7712e-02\\ \hline
   1/8     &8.9650e-03   &3.5619e-01   &2.4629e-04   &1.6900e-02\\ \hline
   1/16    &3.1718e-03   &2.5190e-01   &4.3679e-05   &5.9764e-03\\ \hline
   1/32    &1.1215e-03   &1.7812e-01   &7.7825e-06   &2.1130e-03\\ \hline
   1/64    &3.9652e-04   &1.2595e-01   &1.3812e-06   &7.4708e-04\\ \hline
   1/128   &1.4019e-04   &8.9063e-02   &2.4431e-07   &2.6413e-04\\ \hline
 \mbox{Conv.Rate}&   1.4984 &  4.9966e-01 &  2.4907 &  1.4995\\ \hline
\hline
\end{tabular}
\end{table}
\appendix

\section{A mass-preserving Scott-Zhang operator}
We will prove the existence of an interpolation $Q_0$ used
    in \eqref{Q0} and in the previous section,
    which is a special Scott-Zhang operator\cite{Scott-Zhang}.
The new Scott-Zhang operator preserves the mass on each element
   and on each face, of
    four orders and three orders less, respectively, when interpolating
    $H^1(\Omega)$ functions to the finite element $V_h$ functions.
We shall derive the optimal-order approximation properties for the
  interpolation in the section, which leads to a quasi-optimal convergence
   of the weak Galerkin finite element method (\ref{wg}).

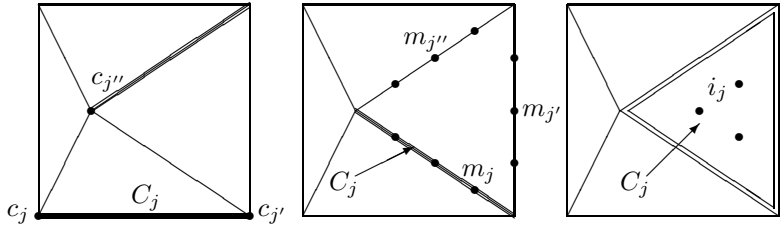
\begin{figure}[htb]\setlength\unitlength{1pt}\begin{center}
    \begin{picture}(280,85)(0,0)
  \def\gd{   \put(0,0){\line(1,0){80}} \put(0,80){\line(1,0){80}}
   \put(0,0){\line(0,1){80}} \put(80,0){\line(0,1){80}}
   \put(20,40){\line(3,-2){60}} \put(20,40){\line(3,2){60}}
   \put(20,40){\line(-1,-2){20}} \put(20,40){\line(-1,2){20}} }
   \put(0,0){\begin{picture}(80,80)(0,0)\gd
    \put(20,40){\circle*{3}}
        \multiput(20,40.8)(0,-1.6){2}{\line(3,2){60}}
        \multiput(0,0)(80,0){2}{\circle*{3}}
        \multiput(0,0.6)(0,-1.2){2}{\line(1,0){80}}
     \put(20,50){$c_{j''}$} \put(-12,0){$c_j$}\put(83,0){$c_{j'}$}
      \put(35,5){$C_j$}
      \end{picture}}
   \put(100,0){\begin{picture}(80,80)(0,0) \gd
    \multiput(35,50)(15,10){3}{\circle*{3}}
        \multiput(20,40.8)(0,-1.6){2}{\line(3,-2){60}}
        \multiput(35,30)(15,-10){3}{\circle*{3}}
        \multiput(80,20)(0,20){3}{\circle*{3}}
    \put(38,65){$m_{j''}$} \put(60,15){$m_j$} \put(83,38){$m_{j'}$}
      \put(10,10){$C_j$} \put(20,15){\vector(2,1){22}}
      \end{picture}}
   \put(200,0){\begin{picture}(80,80)(0,0) \gd
     \put(50,40){\circle*{3}}\put(65,50){\circle*{3}}\put(65,30){\circle*{3}}
    \put(53,46){$i_j$}
    \put(23,40){\line(3,2){55}}\put(23,40){\line(3,-2){55}}
    \put(78,3){\line(0,1){74}}
      \put(20,10){$C_j$} \put(30,15){\vector(1,1){20}}
      \end{picture}}

    \end{picture}\end{center}
 \caption{\label{node}
   Averaging patches ($C_j$) in 2D (an edge or a triangle),
    for corner nodes ($c_j$), middle nodes ($m_j$)
    and internal nodes ($i_j$), in 2D. }
\end{figure}

\long\def\comment#1{}
\comment {

 }

The original Scott-Zhang operator maps $u\in H^1(\Omega)$ functions to
   $C^0$-Lagrange finite element functions, preserving the zero boundary
   condition if $u\in H^1(\Omega)$.
It is an Lagrange interpolation.
All the Lagrange nodes (\cite{Brenner-Scott})
     on one element are classified into three types:
   \a{& \hbox{corner nodes} \  c_j:  && \
             \hbox{3 vertex nodes in 2D, or all edge nodes in 3D}, \\
      & \hbox{middle nodes} \     m_j:  & & \
          \hbox{all mid-edge nodes in 2D, or mid-triangle nodes in 3D}, \\
     & \hbox{internal nodes} \   i_j:   && \
           \hbox{all internal nodes in the triangle/tetrahedra. } }
The three types of nodes are illustrated in Figures \ref{node} and \ref{node3d}.
In simple words,  $\{c_j\}$ are nodes shared by possibly more than two elements,
   $\{m_j\}$ are nodes shared by no more than two elements,
   and $\{i_j\}$ are nodes internal to one element.

\begin{figure}[htb]\setlength\unitlength{0.9pt}\begin{center}
    \begin{picture}(320,120)(0,0)
  \def\gd{ \put(0,0){\line(1,0){80}} \put(0,80){\line(1,0){80}}
   \put(0,0){\line(0,1){80}} \put(80,0){\line(0,1){80}}
   \put(0,80){\line(1,-1){80}} \put(80,80){\line(-3,1){60}}
   \put(80,0){\line(1,1){20}} \put(100,100){\line(-1,-5){20}}
   \put(100,100){\line(-1,0){80}} \put(100,100){\line(-1,-1){20}}
   \put(100,100){\line(0,-1){80}}\put(20,100){\line(-1,-1){20}}
    \def\la{\vrule width.4pt height.4pt}
 \multiput(  20.00,  20.00)(  -2.357,  -2.357){  8}{\la}
 \multiput(  20.00,  20.00)(   3.162,  -1.054){ 18}{\la}
 \multiput(  20.00,  20.00)(   3.333,   0.000){ 24}{\la}
 \multiput(  20.00,  20.00)(   0.000,   3.333){ 24}{\la}
 \multiput(   0.00,   0.00)(   0.654,   3.269){ 30}{\la}
 \multiput(  20.00, 100.00)(   1.715,  -2.858){ 34}{\la}
 \multiput(  20.00, 100.00)(   2.357,  -2.357){ 33}{\la}
    }
   \put(0,0){\begin{picture}(80,80)(0,0)\gd
    \multiput(0,0)(20,0){5}{\circle*{3}}
     \multiput(0,0)(0,20){5}{\circle*{3}}
     \multiput(80,0)(-20,20){5}{\circle*{3}}
    \put(-12,81){$c_i$} \put(18,5.5){$c_j$}
     \put(75,2){\line(-1,1){73}}\put(2,2){\line(1,0){73}}
      \put(2,2){\line(0,1){73}}
     \put(0,103){$C_j$}\put(6,98){\vector(0,-1){26.5}}
      \end{picture}}
   \put(110,0){\begin{picture}(80,80)(0,0) \gd
     \put(22,21){\line(3,-1){54}}\put(22,21){\line(0,1){73}}
      \put(76,3){\line(-3,5){54}}
     \put(35,33){\circle*{3}}\put(35,50){\circle*{3}}
     \put(50,26){\circle*{3}} \put(22,25){$m_j$}
     \put(25,107){$C_j$}\put(28,105){\vector(0,-1){20}}
      \end{picture}}
   \put(220,0){\begin{picture}(80,80)(0,0) \gd
     \put(30,76){\circle*{3}}\put(28,65){$i_j$}
    \put(78.5,3){\line(0,1){76}}\put(78,4){\line(-1,1){76}}
    \put(78.5,5){\line(-3,5){56}}
    \put(78.5,78.5){\line(-3,1){56}}
    \put(2,79){\line(1,0){77}} \put(2,79){\line(1,1){19}}
    \put(45,105){$C_j$}\put(48,103){\vector(0,-1){14}}
      \end{picture}}

    \end{picture}\end{center}

\caption{\label{node3d} Averaging patches  ($C_j$) in 3D
       (a triangle or a tetrahedron),
    for corner nodes ($c_j$), middle nodes ($m_j$)
    and internal nodes ($i_j$), in 3D. }
\end{figure}
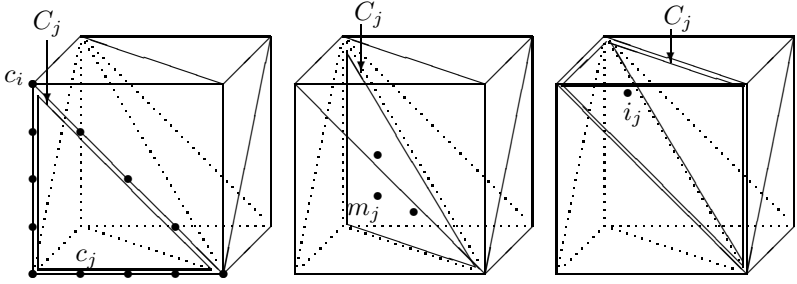

A Lagrange nodal basis function $\phi_j$
    is a $P_{k+2}$ polynomial which assumes value 1 at one node $c_j$, but
    vanishes at all other $\dim P_{k+2}^d-1$ nodes.
For example, a $P_4^2$ nodal basis function on the reference triangle
   $\{ 0 \le x, y, 1-x-y \le 1 \}$, at node $(1/4,0)$, c.f. Figure
     \ref{phi-2-g}, is
   \an{\label{phi-2} \phi_2(x,y) = \frac{x  (1-x-y)(3/4-x-y)(2/4-x-y)}
                   {(1/4) (1-1/4-0)(3/4-1/4-0)(2/4-1/4-0)}. }
The restriction of a nodal basis $\phi_j$ on a lower dimensional
  simplex, a triangle or an edge or a vertex, is also a nodal basis function
    on that lower dimensional finite element.
For example, this node basis function \eqref{phi-2}
     is the restriction of the
   following 3D nodal basis function (at node $(1/4,0,0)$ on tetrahedron
    $\{ 0 \le x, y, z, 1-x-y-z \le 1 \}$) on the reference triangle,
   \an{\label{phi-3d}
     \phi_j(x,y,z)  = \frac{x  (1-x-y-z)(3/4-x-y-z)(2/4-x-y-z)}
                   {(1/4) (1-1/4-0-0)(3/4-1/4-0-0)(2/4-1/4-0-0)}. }
The restriction of 2D basis function $\phi_2$ in \eqref{phi-2} in 1D is,
    c.f. Figure \ref{phi-2-g},
   \an{\label{phi-1d}
     \phi_{j'}(x)  = \frac{x  (1-x )(3/4-x )(2/4-x )}
                   {(1/4) (1-1/4 )(3/4-1/4 )(2/4-1/4 )}. }

\begin{figure}[htb]\setlength\unitlength{1.5pt}\begin{center}
    \begin{picture}(200,80)(0,0)
    \put(0,0){\setlength\unitlength{1.3pt}\begin{picture}(80,80)(0,0)
    \multiput(0,0)(20,0){5}{\circle*{3}}
    \multiput(20,20)(20,0){2}{\circle*{3}}\multiput(20,40)(20,0){1}{\circle*{3}}
     \multiput(0,0)(0,20){5}{\circle*{3}} \put(20,0){\circle{5}}
     \multiput(80,0)(-20,20){5}{\circle*{3}}
     \put(18,5.5){$\phi_2$}
     \put(80,0){\line(-1,1){80}}\put(0,0){\line(1,0){80}}
      \put(0,0){\line(0,1){80}}
      \end{picture}}
    \put(120,0){\begin{picture}(80,80)(0,0)
      \put(0,0){\line(1,0){80}}\multiput(0,0)(20,0){5}{\circle*{3}}
      \put(20,0){\circle{5}} \put(17,6){$\phi_{j'}$ (in 1D)}
  \put(   0.00,  20.00){\circle{2}} \put(   0.00,  20.00){\line(1,0){80}}
  \put(20.00,  70.00){\circle{2}}
  \put(40.00,  20.00){\circle{2}}
  \put(60.00,  20){\circle{2}}
  \put(80.00,  20.00){\circle{2}}
     \def\la{\vrule width.4pt height.4pt}
 \multiput(   0.00,  20.00)(   0.038,   0.331){ 65}{\la}
 \multiput(   2.50,  41.76)(   0.052,   0.329){ 48}{\la}
 \multiput(   5.00,  57.60)(   0.076,   0.325){ 33}{\la}
 \multiput(   7.50,  68.32)(   0.122,   0.310){ 20}{\la}
 \multiput(  10.00,  74.69)(   0.226,   0.245){ 11}{\la}
 \multiput(  12.50,  77.40)(   0.331,  -0.036){  7}{\la}
 \multiput(  15.00,  77.13)(   0.229,  -0.243){ 10}{\la}
 \multiput(  17.50,  74.47)(   0.163,  -0.291){ 15}{\la}
 \multiput(  20.00,  70.00)(   0.132,  -0.306){ 18}{\la}
 \multiput(  22.50,  64.22)(   0.118,  -0.312){ 21}{\la}
 \multiput(  25.00,  57.60)(   0.111,  -0.314){ 22}{\la}
 \multiput(  27.50,  50.55)(   0.111,  -0.314){ 22}{\la}
 \multiput(  30.00,  43.44)(   0.114,  -0.313){ 21}{\la}
 \multiput(  32.50,  36.58)(   0.122,  -0.310){ 20}{\la}
 \multiput(  35.00,  30.25)(   0.136,  -0.304){ 18}{\la}
 \multiput(  37.50,  24.67)(   0.157,  -0.294){ 15}{\la}
 \multiput(  40.00,  20.00)(   0.189,  -0.275){ 13}{\la}
 \multiput(  42.50,  16.37)(   0.235,  -0.237){ 10}{\la}
 \multiput(  45.00,  13.85)(   0.292,  -0.162){  8}{\la}
 \multiput(  47.50,  12.46)(   0.331,  -0.036){  7}{\la}
 \multiput(  50.00,  12.19)(   0.319,   0.097){  7}{\la}
 \multiput(  52.50,  12.95)(   0.277,   0.186){  9}{\la}
 \multiput(  55.00,  14.63)(   0.239,   0.232){ 10}{\la}
 \multiput(  57.50,  17.05)(   0.216,   0.254){ 11}{\la}
 \multiput(  60.00,  20.00)(   0.205,   0.263){ 12}{\la}
 \multiput(  62.50,  23.20)(   0.207,   0.261){ 12}{\la}
 \multiput(  65.00,  26.35)(   0.226,   0.245){ 11}{\la}
 \multiput(  67.50,  29.06)(   0.267,   0.200){  9}{\la}
 \multiput(  70.00,  30.94)(   0.325,   0.074){  7}{\la}
 \multiput(  72.50,  31.51)(   0.298,  -0.149){  8}{\la}
 \multiput(  75.00,  30.25)(   0.189,  -0.275){ 13}{\la}
 \multiput(  77.50,  26.62)(   0.118,  -0.312){ 21}{\la}
      \end{picture}}

    \end{picture}\end{center}

\caption{
    A 2D nodal basis $\phi_2$ \eqref{phi-2}
    and its restriction in  1D, $\phi_{j'}$  \eqref{phi-1d}.
   \label{phi-2-g} }
\end{figure}
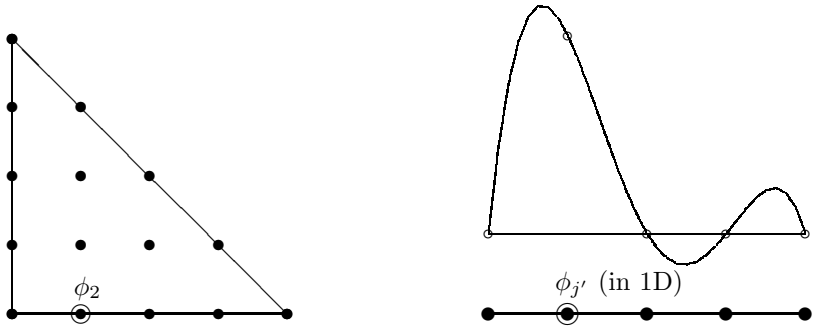

\comment{


t=0:2.5:80; t=t'; x=t/80; y=1/4;
g=[t*4 x.*(1-x).*(3/4-x).*(2/4-x)./(y.*(1-y).*(3/4-y).*(2/4-y))];
plot(x,g(:,2))
g(:,2)=g(:,2)*12+40;
[0*t(1:end-1)+99 g(1:end-1,:) g(2:end,:)]

tp=-485/128 +2865/32* x -21105/64*x.^2 +13615/32*x.^3 -11655/64*x.^4
plot(x,tp)
g=[t*4 tp*12+40];
[0*t(1:end-1)+99 g(1:end-1,:) g(2:end,:)]

p4:=a0 +a10*x +  a20*x^2 + a30*x^3+ a40*x^4  :
ph2:=(x,y) -> x*(1-x-y)*(3/4-x-y)*(2/4-x-y):
phi2:=ph2(x,0)/ph2(1/4,0):
i2:=int(phi2*phi2,x=0..1):
  e1:=int(p4*phi2,x=0..1)=1:

f:=1: g:=int(f*phi2/i2, x=0..1):
  e2:=int(p4*(f-g*phi2),x=0..1)=0:
f:=x: g:=int(f*phi2/i2, x=0..1):
  e3:=int(p4*(f-g*phi2),x=0..1)=0:
f:=x^2: g:=int(f*phi2/i2, x=0..1):
  e4:=int(p4*(f-g*phi2),x=0..1)=0:
f:=x^3: g:=int(f*phi2/i2, x=0..1):
  e5:=int(p4*(f-g*phi2),x=0..1)=0:
f:=x^4: g:=int(f*phi2/i2, x=0..1):
  e6:=int(p4*(f-g*phi2),x=0..1)=0:
s:=solve({e1,e2,  e4,e5,e6},
    {a0, a10,a20, a30, a40 });
subs(s,p4);
}

On each element $T$ (an edge, a triangle, or a tetrahedron),  the $P_k$
   Lagrange basis $\{\phi_j\}$ has a dual basis $\{\psi_j\in P_k^d \}$,
   satisfying
   \an{\label{dual} \int_T \phi_j \psi_{j'} dx = \delta_{jj'}=\begin{cases}
            1 & \hbox{ if } j=j', \\
        0 & \hbox{ if } j\ne j'. \end{cases} }
In other words, if writing $\{ \psi_j\}$ as linear combinations of Lagrange
    basis $\{\phi_j\}$, the coefficients are simply the inverse matrix
    of the mass matrix, the $L^2$-matrix of $\{\phi_j\}$.
For example, the dual basis function $\psi_2$ for the nodal basis function
   $\phi_2$ in \eqref{phi-2} (2D) is
  \a{
      \psi_2^{[2D]}(x,y) &= \frac{2835}4 x - \frac{12285}4x^2 +
                   \frac{8505}2 x^3 +  8505 x^2 y
                 +\frac{8505}2 xy^2 \\
              &\qquad - 1890 x^4  - 5670 x^3  y - 5670 x^2  y^2  - 1890 x y^3.
    }
We can compute the dual of $\psi_{j'}$ in \eqref{phi-1d} in 1D to get
   \an{\label{psi-1}
   \psi_{j'}^{[1D]}(x) &=-\frac{485}{128} +\frac{2865}{32} x
          -\frac{21105}{64}x^2 +\frac{13615}{32}x^3 -\frac{11655}{64}x^4\\
    &=-\frac{485}{128}\phi_0+ \frac{64225}{16384}\phi_1
          +\frac{345}{1024}\phi_2 -\frac{4255}{16384}\phi_3
          -\frac{85}{128}\phi_4,
    \nonumber }
   where $\phi_i$ is the nodal basis on $[0,1]$
        at $x_i=i/4$, $i=0,1,2,3,4. $
The dual function $\psi_{j'}^{[1D]}(x)$ in \eqref{psi-1} is plotted in
    Figure \ref{psi-1-g}.
Similarly we can compute the dual basis function for \eqref{phi-3d} in 3D.
We note that both Lagrange nodal basis and its dual basis are affine
  invariant.  That is, the Lagrange basis on the reference triangle
   is also the Lagrange basis on a general triangle after an affine mapping.
 For simplicity, we use the same notations $\phi_j$ and $\psi_j^{[2D]}$ for
   the nodal basis and the dual basis functions on the
     reference triangle and on a general triangle.

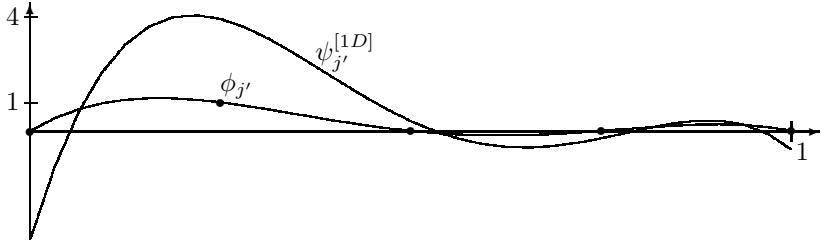
\begin{figure}[htb]\setlength\unitlength{0.9pt}\begin{center}
     \def\la{\vrule width.4pt height.4pt}
    \begin{picture}(350,100)(0,0)
    \put(20,5){\begin{picture}(80,80)(0,0)
  \put(0,40){\vector(1,0){332}} \put(322,28){$1$}
   \put(320,36){\line(0,1){8}} \put(80.00,  57.00){$\phi_{j'}$}
  \put(0,-5){\vector(0,1){99}}  \put(-10,85){$4$}\put(-10,49){$1$}
  \put(-2,88){\line(1,0){5}} \put(-2,52){\line(1,0){5}}
  \put(0.00,  40.00){\circle*{3}} \put(120.00,70){$\psi_{j'}^{[1D]}$}
  \put(80.00, 52.00){\circle*{3}}
  \put(160.00,  40.00){\circle*{3}}
  \put(240.00,  40.00){\circle*{3}}
  \put(320.00,  40.00){\circle*{3}}

 \multiput(   0.00,  40.00)(   0.295,   0.154){ 33}{\la}
 \multiput(  10.00,  45.22)(   0.312,   0.118){ 32}{\la}
 \multiput(  20.00,  49.02)(   0.323,   0.083){ 30}{\la}
 \multiput(  30.00,  51.60)(   0.330,   0.050){ 30}{\la}
 \multiput(  40.00,  53.12)(   0.333,   0.022){ 30}{\la}
 \multiput(  50.00,  53.78)(   0.333,  -0.002){ 30}{\la}
 \multiput(  60.00,  53.71)(   0.333,  -0.021){ 30}{\la}
 \multiput(  70.00,  53.07)(   0.331,  -0.036){ 30}{\la}
 \multiput(  80.00,  52.00)(   0.330,  -0.046){ 30}{\la}
 \multiput(  90.00,  50.61)(   0.329,  -0.052){ 30}{\la}
 \multiput( 100.00,  49.02)(   0.329,  -0.056){ 30}{\la}
 \multiput( 110.00,  47.33)(   0.329,  -0.056){ 30}{\la}
 \multiput( 120.00,  45.62)(   0.329,  -0.054){ 30}{\la}
 \multiput( 130.00,  43.98)(   0.330,  -0.050){ 30}{\la}
 \multiput( 140.00,  42.46)(   0.330,  -0.044){ 30}{\la}
 \multiput( 150.00,  41.12)(   0.331,  -0.037){ 30}{\la}
 \multiput( 160.00,  40.00)(   0.332,  -0.029){ 30}{\la}
 \multiput( 170.00,  39.13)(   0.333,  -0.020){ 30}{\la}
 \multiput( 180.00,  38.52)(   0.333,  -0.011){ 30}{\la}
 \multiput( 190.00,  38.19)(   0.333,  -0.002){ 30}{\la}
 \multiput( 200.00,  38.12)(   0.333,   0.006){ 30}{\la}
 \multiput( 210.00,  38.31)(   0.333,   0.013){ 30}{\la}
 \multiput( 220.00,  38.71)(   0.333,   0.019){ 30}{\la}
 \multiput( 230.00,  39.29)(   0.333,   0.024){ 30}{\la}
 \multiput( 240.00,  40.00)(   0.332,   0.026){ 30}{\la}
 \multiput( 250.00,  40.77)(   0.332,   0.025){ 30}{\la}
 \multiput( 260.00,  41.52)(   0.333,   0.022){ 30}{\la}
 \multiput( 270.00,  42.18)(   0.333,   0.015){ 30}{\la}
 \multiput( 280.00,  42.62)(   0.333,   0.005){ 30}{\la}
 \multiput( 290.00,  42.76)(   0.333,  -0.010){ 30}{\la}
 \multiput( 300.00,  42.46)(   0.332,  -0.029){ 30}{\la}
 \multiput( 310.00,  41.59)(   0.329,  -0.052){ 30}{\la}
      \end{picture}}
    \put(20,5){\begin{picture}(80,80)(0,0)
 \multiput(   0.00,  -5.47)(   0.106,   0.316){ 94}{\la}
 \multiput(  10.00,  24.39)(   0.133,   0.306){ 75}{\la}
 \multiput(  20.00,  47.44)(   0.168,   0.288){ 59}{\la}
 \multiput(  30.00,  64.51)(   0.214,   0.255){ 46}{\la}
 \multiput(  40.00,  76.44)(   0.266,   0.201){ 37}{\la}
 \multiput(  50.00,  83.97)(   0.311,   0.120){ 32}{\la}
 \multiput(  60.00,  87.81)(   0.332,   0.027){ 30}{\la}
 \multiput(  70.00,  88.63)(   0.329,  -0.052){ 30}{\la}
 \multiput(  80.00,  87.04)(   0.315,  -0.109){ 31}{\la}
 \multiput(  90.00,  83.59)(   0.301,  -0.144){ 33}{\la}
 \multiput( 100.00,  78.80)(   0.290,  -0.165){ 34}{\la}
 \multiput( 110.00,  73.12)(   0.284,  -0.175){ 35}{\la}
 \multiput( 120.00,  66.97)(   0.282,  -0.177){ 35}{\la}
 \multiput( 130.00,  60.70)(   0.285,  -0.173){ 35}{\la}
 \multiput( 140.00,  54.62)(   0.291,  -0.163){ 34}{\la}
 \multiput( 150.00,  49.00)(   0.299,  -0.148){ 33}{\la}
 \multiput( 160.00,  44.04)(   0.308,  -0.127){ 32}{\la}
 \multiput( 170.00,  39.91)(   0.317,  -0.102){ 31}{\la}
 \multiput( 180.00,  36.70)(   0.325,  -0.072){ 30}{\la}
 \multiput( 190.00,  34.49)(   0.331,  -0.040){ 30}{\la}
 \multiput( 200.00,  33.28)(   0.333,  -0.008){ 30}{\la}
 \multiput( 210.00,  33.03)(   0.333,   0.021){ 30}{\la}
 \multiput( 220.00,  33.65)(   0.330,   0.045){ 30}{\la}
 \multiput( 230.00,  35.00)(   0.328,   0.062){ 30}{\la}
 \multiput( 240.00,  36.88)(   0.326,   0.071){ 30}{\la}
 \multiput( 250.00,  39.07)(   0.326,   0.071){ 30}{\la}
 \multiput( 260.00,  41.27)(   0.328,   0.061){ 30}{\la}
 \multiput( 270.00,  43.13)(   0.331,   0.038){ 30}{\la}
 \multiput( 280.00,  44.27)(   0.333,  -0.001){ 30}{\la}
 \multiput( 290.00,  44.24)(   0.329,  -0.055){ 30}{\la}
 \multiput( 300.00,  42.56)(   0.311,  -0.120){ 32}{\la}
 \multiput( 310.00,  38.69)(   0.277,  -0.185){ 36}{\la}
          \end{picture}}

    \end{picture}\end{center}
\caption{
    A 1D nodal basis $\phi_{j'}$ \eqref{phi-1d}
    and its dual basis in  1D, $\psi_{j'}^{[1D]}$  \eqref{psi-1},
   $\int_0^1 \phi_{j'} \psi_{j'}^{[1D]}=1$.
   \label{psi-1-g} }
 \end{figure}

\comment{
p4:=a0 +a10*x + a01*y + a20*x^2 +a11*x*y + a02*y^2 +
    a30*x^3  + a21*x^2*y +a12*x*y^2 + a03*y^3 +
    a40*x^4  + a31*x^3*y  +a22*x^2*y^2 +a13*x*y^3 + a04*y^4:
ph2:=(x,y) -> x*(1-x-y)*(3/4-x-y)*(2/4-x-y):
phi2:=ph2(x,y)/ph2(1/4,0):
i2:=int(int(phi2*phi2,y=0..1-x),x=0..1):
  e1:=int(int(p4*phi2,y=0..1-x),x=0..1)=1:

f:=1: g:=int(int(f*phi2/i2,y=0..1-x),x=0..1):
  e2:=int(int(p4*(f-g*phi2),y=0..1-x),x=0..1)=0:
f:=x: g:=int(int(f*phi2/i2,y=0..1-x),x=0..1):
  e3:=int(int(p4*(f-g*phi2),y=0..1-x),x=0..1)=0:
f:=y: g:=int(int(f*phi2/i2,y=0..1-x),x=0..1):
  e4:=int(int(p4*(f-g*phi2),y=0..1-x),x=0..1)=0:
f:=x^2: g:=int(int(f*phi2/i2,y=0..1-x),x=0..1):
  e5:=int(int(p4*(f-g*phi2),y=0..1-x),x=0..1)=0:
f:=x*y: g:=int(int(f*phi2/i2,y=0..1-x),x=0..1):
  e6:=int(int(p4*(f-g*phi2),y=0..1-x),x=0..1)=0:
f:=y^2: g:=int(int(f*phi2/i2,y=0..1-x),x=0..1):
  e7:=int(int(p4*(f-g*phi2),y=0..1-x),x=0..1)=0:
f:=x^3: g:=int(int(f*phi2/i2,y=0..1-x),x=0..1):
  e8:=int(int(p4*(f-g*phi2),y=0..1-x),x=0..1)=0:
f:=x^2*y: g:=int(int(f*phi2/i2,y=0..1-x),x=0..1):
  e9:=int(int(p4*(f-g*phi2),y=0..1-x),x=0..1)=0:
f:=x*y^2: g:=int(int(f*phi2/i2,y=0..1-x),x=0..1):
  e10:=int(int(p4*(f-g*phi2),y=0..1-x),x=0..1)=0:
f:=y^3: g:=int(int(f*phi2/i2,y=0..1-x),x=0..1):
  e11:=int(int(p4*(f-g*phi2),y=0..1-x),x=0..1)=0:
f:=x^4: g:=int(int(f*phi2/i2,y=0..1-x),x=0..1):
  e12:=int(int(p4*(f-g*phi2),y=0..1-x),x=0..1)=0:
f:=x^3*y: g:=int(int(f*phi2/i2,y=0..1-x),x=0..1):
  e13:=int(int(p4*(f-g*phi2),y=0..1-x),x=0..1)=0:
f:=x^2*y^2: g:=int(int(f*phi2/i2,y=0..1-x),x=0..1):
  e14:=int(int(p4*(f-g*phi2),y=0..1-x),x=0..1)=0:
f:=x*y^3 : g:=int(int(f*phi2/i2,y=0..1-x),x=0..1):
  e15:=int(int(p4*(f-g*phi2),y=0..1-x),x=0..1)=0:
f:= y^4 : g:=int(int(f*phi2/i2,y=0..1-x),x=0..1):
  e16:=int(int(p4*(f-g*phi2),y=0..1-x),x=0..1)=0:
s:=solve({e1,e16,e2, e4,e5,e6,e7,e8,e9,e10,e11,e12,e13,e14,e15},
    {a0, a10, a01, a20,a11,a02, a30,a21,a12,a03, a40,a31,a22,a13,a04});
subs(s,p4);
}

We now define the Scott-Zhang interpolation operator:
   \a{ Q_0 : H^1 (\Omega) \to V_h, }
where $V_h$ is the $C^0$-$P_{k+2}$ finite element space defined in \eqref{Vh}.
$Q_0v$ is defined by the nodal values at three types nodes.

\begin{enumerate}
\item
For each corner node $c_j$ (shared by possibly more than two elements), we
  select one boundary $(d-1)$-dimensional simplex $C_j$ if $c_j$ is
    a boundary node, or any
   one  $(d-1)$-dimensional face simplex $C_j$ on which the node is,
    as $c_j$'s averaging patch.
C.f. Figure \ref{node}, the boundary node $c_j$ has a boundary edge $C_j$,
   while the corner node $c_{j''}$ can choose any one of four
   edges passing it, as its averaging patch.
In Figure \ref{node3d},  a corner node $c_j$ has a triangle $C_j$ as its
   averaging patch.
In both 2D and 3D, we use a same definition
   \an{\label{c-i} Q_0 v(c_j) = \int_{C_j} \psi_j^{[(d-1)D]} v(x) d x. }

\item
For each middle node $m_j$, the averaging patch  is the unique
    $(d-1)$-dimensional simplex $C_j$ containing $m_j$,
   see Figures \ref{node} and \ref{node3d}.
The interpolated nodal value is then determined by the unique solution
   of linear equations:
   \an{\label{m-i} \int_{C_j}\Big( \sum_{m_{j'} \in C_j}
       Q_0 v(m_{j'})\phi_{j'}(x)
    + \sum_{c_{j } \in C_j}
       Q_0 v(c_{j })\phi_{j }(x) - v(x) \Big)  p_{i}(x) dx = 0 }
  for all degree $(k+2-d)$ polynomials $p_{i}$ on ($d-1$)-simplex $C_j$,
    where $Q_0v(c_j)$ is defined in \eqref{c-i}.
In 2D, c.f. Figure \ref{node}, after we determine the nodal values
   at the two end points ($c_j$), we determine the middle-edge points'
    nodal value by \eqref{m-i}.

\item
  After determine all nodal values on the surface of each element, we
   define the interpolation inside the element:
  The nodal values at internal nodes ($Q_0(i_{j''})$)
    are determined by the unique
    solution of the following linear equations
   \an{\label{i-i} &\quad \int_{C_j}
     \Big( \sum_{i_{j''} \in C_j}Q_0 v(i_{j''})\phi_{j''}\Big)  p_{i} dx
    \\ \nonumber
    &= \int_{C_j} \Big(v - \sum_{m_{j'} \in C_j}
       Q_0 v(m_{j'})\phi_{j'}
    - \sum_{c_{j } \in C_j}
       Q_0 v(c_{j })\phi_{j }   \Big)  p_{i} dx }
  for all degree $(k+1-d)$ polynomials $p_{i}$ on $d$-simplex $C_j$.
\end{enumerate}
By \eqref{c-i}--\eqref{i-i}, the (refined) Scott-Zhang interpolation is
  \an{\label{s-z} Q_0 v = \sum_{x_j\in {\cal N}_h} Q_0v(x_j) \phi_j(x), }
   where $ {\cal N}_h $ is the set of all $C^0$-$P_{k+2}$ Lagrange nodes
    of triangulation ${\cal T}_h$.

\begin{remark} If all corner nodes have selected a same averaging patch $C_j$
   as the unique patch for the middle nodes on the patch, then
   the solution of \eqref{m-i} is the $L^2$-projection, i.e.,
    \an{\label{m-c-i}
          Q_0 v(m_{j'}) = \int_{C_j} \psi_{j'}^{[(d-1)D]} v(x) d x. }
    In fact, \eqref{m-c-i} is the definition of the original
       Scott-Zhang operator in \cite{Scott-Zhang}.
    In the same fashion,  if all patches of the face nodes are face
    $(d-1)$-simplexes of $C_j$, then the internal nodal values are
    exactly that of the $L^2$-projection on $C_j$, i.e., the solution of
     \eqref{i-i} satisfies
     \an{\label{i-c-i}  Q_0 v(i_{j''}) =
          \int_{C_j} \psi_{j''}^{[dD]} v(x) d x. }
    But if there are more than one triangle or tetrahedron in  ${\cal T}_h$,
      some $C_j$ must be from neighboring elements.
   So \eqref{m-c-i} and \eqref{i-c-i} can not be satisfied in general.
   Otherwise the Scott-Zhang operator would preserve mass of order $(k+2)$
    both on an element and on its faces.
 \end{remark}

\begin{lemma} The Scott-Zhang interpolation operator \eqref{s-z} is
  well-defined, i.e., the linear systems of equations
     \eqref{m-i} and \eqref{i-i} both have unique solutions.
  \end{lemma}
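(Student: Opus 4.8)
The plan is to read \eqref{m-i} and \eqref{i-i} as square systems of linear equations and to establish their nonsingularity by showing the associated homogeneous systems admit only the trivial solution; existence and uniqueness then follow together. First I would observe that the systems decouple: \eqref{m-i} is solved separately on each $(d-1)$-simplex $C_j$ for the unknowns $\{Q_0v(m_{j'}):m_{j'}\in C_j\}$ (the corner values appearing in \eqref{m-i} have already been fixed unconditionally by the integral formula \eqref{c-i}), and \eqref{i-i} is solved separately on each $d$-simplex $C_j$ for the unknowns $\{Q_0v(i_{j''}):i_{j''}\in C_j\}$. The counting step uses the fact that the $P_{k+2}$ Lagrange element on an $n$-simplex has exactly $\dim P_{k+1-n}^{n}$ interior nodes: with $n=d-1$ this gives $\dim P_{k+2-d}^{d-1}$ unknowns in \eqref{m-i}, and with $n=d$ it gives $\dim P_{k+1-d}^{d}$ unknowns in \eqref{i-i}, which match the prescribed numbers of test polynomials $p_i$. (When $k+2-d<0$, resp. $k+1-d<0$, there are no such nodes and nothing to prove.) Hence both systems are square, and it suffices to kill the kernels.

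For the kernel of \eqref{m-i} I would set $v\equiv0$ and all corner values to zero; the remaining data is $w=\sum_{m_{j'}\in C_j}Q_0v(m_{j'})\phi_{j'}\in P_{k+2}(C_j)$, which vanishes at every Lagrange node on $\partial C_j$, since those are all corner nodes and hence absent from the sum while the $\phi_{j'}$ are the nodal basis at the nodes interior to $C_j$. Restricted to any $(d-2)$-face of $C_j$, $w$ is a polynomial of degree $\le k+2$ vanishing at all $\dim P_{k+2}^{d-2}$ nodes of that face, hence vanishes there identically; thus $w|_{\partial C_j}=0$, and therefore $w=b_{C_j}q$ with $b_{C_j}$ the bubble of $C_j$ (the product of its $d$ barycentric coordinates, degree $d$) and $q\in P_{k+2-d}(C_j)$. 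Since $q$ is an admissible test function in \eqref{m-i}, choosing $p_i=q$ in the homogeneous equation gives $\int_{C_j}b_{C_j}\,q^2\,dx=0$; positivity of $b_{C_j}$ on the open simplex forces $q\equiv0$, whence $w\equiv0$ and all $Q_0v(m_{j'})=0$ by linear independence of the nodal basis.

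The kernel of \eqref{i-i} is treated by the identical argument one dimension higher: with $v\equiv0$ and all corner and middle values zero, $w=\sum_{i_{j''}\in C_j}Q_0v(i_{j''})\phi_{j''}$ vanishes at all nodes of $\partial C_j$ (each face of the $d$-simplex $C_j$ is a $(d-1)$-simplex carrying only corner and middle nodes, never interior ones), so $w|_{\partial C_j}=0$ and $w=b_{C_j}q$ with $b_{C_j}$ the full simplex bubble (product of $d+1$ barycentric coordinates, degree $d+1$) and $q\in P_{k+1-d}(C_j)$; testing against $p_i=q$ gives $\int_{C_j}b_{C_j}\,q^2\,dx=0$, hence $q\equiv0$ and $w\equiv0$. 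Combined with squareness this yields unique solvability of \eqref{m-i} and \eqref{i-i}, and the three families of nodal values are then computed in the order corners $\to$ middles $\to$ internals, each level using only previously determined values.

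The step I expect to demand the most care is the degree bookkeeping: verifying that after dividing out the bubble the cofactor $q$ has \emph{exactly} the degree $k+2-d$ (resp. $k+1-d$) needed to be an admissible $p_i$ in \eqref{m-i} (resp. \eqref{i-i}). This coincidence is the whole point, since it is precisely the mechanism that later produces preservation of surface mass of order $k+2-d$ and volume mass of order $k+1-d$, and it is also where the hypothesis $k\ge0$ enters (so that $P_{k+2}$ genuinely carries the interior nodes in question). By affine invariance of the Lagrange and dual bases, all of this can be checked once on a fixed reference simplex.
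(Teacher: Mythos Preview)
Your proof is correct and rests on the same mechanism as the paper's: the nodal basis functions at the middle (resp.\ internal) nodes factor through the face (resp.\ element) bubble, and positivity of the bubble on the open simplex forces the associated bilinear form to be definite. The paper packages this differently: it changes the test basis $\{p_i\}$ to the Lagrange basis $\{\phi_i^s\}$ on the interior subsimplex $C_j^s$, writes $\phi_{j'}=\phi_{j'}^s\,b(\mathbf x)/b(\mathbf x_{j'})$, and then identifies the coefficient matrix of \eqref{m-i} explicitly as the weighted mass matrix $a_{i,j'}=\int_{C_j}\phi_i^s\phi_{j'}^s\,w\,d\mathbf x$ with $w=b/b(\mathbf x_{j'})>0$, hence symmetric positive definite. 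Your kernel argument---factor $w=b_{C_j}q$ with $q\in P_{k+2-d}$ and test against $p_i=q$ to obtain $\int_{C_j}b_{C_j}q^2=0$---is the same positivity argument stated more directly, and it avoids introducing the auxiliary subsimplex basis altogether. Both routes are valid; yours is somewhat more streamlined, while the paper's makes the matrix structure explicit (and even displays the $3\times3$ inverse in the case $d=2$, $k=2$).
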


\comment{
restart;
p4:=a0 +a10*x +  a20*x^2 + a30*x^3+ a40*x^4  :
ph:=x-> (1/4-x)*(2/4-x)*(3/4-x)*(1-x): phi[1]:=ph(x)/ph(0):
ph:=x-> (0-x)*(2/4-x)*(3/4-x)*(1-x):   phi[2]:=ph(x)/ph(1/4):
ph:=x-> (0-x)*(1/4-x)*(3/4-x)*(1-x):   phi[3]:=ph(x)/ph(2/4):
ph:=x-> (0-x)*(1/4-x)*(2/4-x)*(1-x):   phi[4]:=ph(x)/ph(3/4):
ph:=x-> (0-x)*(1/4-x)*(2/4-x)*(3/4-x): phi[5]:=ph(x)/ph(1):
with(linalg):
a:=matrix(5,5):
for i from 1 to 5 do for j from 1 to 5 do
   a[i,j]:=int(phi[i]*phi[j],x=0..1):od: od:
b:=inverse(a);
for i from 1 to 5 do
psi[i]:=simplify(
 expand(b[1,i]*phi[1]+b[2,i]*phi[2]+b[3,i]*phi[3]+b[4,i]*phi[4]+b[5,i]*phi[5]));
  od;
as:=matrix(3,3): for i from 1 to 3 do for j from 1 to 3 do
   as[i,j]:=coeff(psi[j+1],x,i-1); od; od;
ai:=inverse(as);
as:=inverse(ai);
inverse(as);

 }

\begin{proof}
   For the linear system of equations \eqref{m-i}, we change the
     $P_{k+2-d}^{d-1}$ basis
    functions ($\{p_i=1,x,...,x^k\}$ when $d=2$, or
      $\{p_i=1,x,y,x^2,xy,...,y^{k-1}\}$ when $d=3$)
     uniquely as linear combinations of the Lagrange basis functions
     on the subinterval $C_j^s$ ($d=2$) or the subtriangle $C_j^s$
        ($d=3$), c.f.
    Figure \ref{1-basis}.
   \an{\label{b-n} p_i = \sum_j c_{i,j} \phi_j^s, \quad
             i=1,2,...,\dim (P_{k+2-d}^{d-1}). }
  The nodal basis functions on a simplex $C_j$ and its subsimplex
    $C_j^s$ differ by a bubble functions:
    \an{\label{l-b} \phi_j  = \phi_{j'}^s \frac{ b(\b x) }{ b (\b x_j)} ,
   } where $b(\b x)$ is the bubble functions assuming 0 on the boundary
   of $C_j$.  For example,  c.f. Figure \ref{1-basis}, when $d=2$ and
    $C_j=[0,1]$,
    \a{ b( x)&= x(1-x), \\
        \phi_1^s(x) &=\frac{(x-2/4)(x-3/4)}
               {(1/4-2/4)(1/4-3/4)}, \\
       \phi_2 (x) & = \phi_1^s(x) \frac{b(x)}{b(1/4)}. }

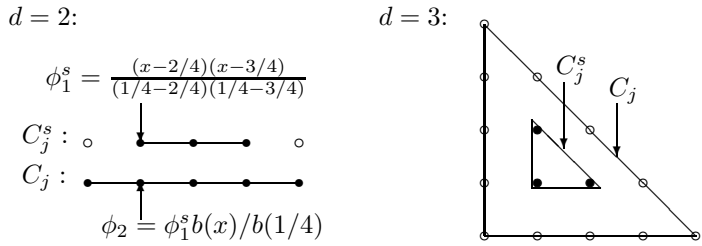
\begin{figure}[htb]\setlength\unitlength{1pt}\begin{center}
    \begin{picture}(280,90)(0,0)
    \put(30,15){\begin{picture}(80,80)(0,0)
   \put(-30,65){$d=2$:}
   \put(-25,5){$C_j:$}\put(0,5){\line(1,0){80}}
     \multiput(0,5)(20,0){5}{\circle*{3}}
   \put(-25,20){$C_j^s:$}\put(20,20){\line(1,0){40}}
    \multiput(20,20)(20,0){3}{\circle*{3}}\multiput(0,20)(80,0){2}{\circle{3}}
    \put(20,35){\vector(0,-1){15}}\put(-16,42){$\phi_1^s=\frac{(x-2/4)(x-3/4)}
               {(1/4-2/4)(1/4-3/4)}$}
   \put(5,-14){$\phi_2=\phi_1^s b(x)/b(1/4)$}\put(20,-9){\vector(0,1){15}}
          \end{picture}}
    \put(180,0){\begin{picture}(80,80)(0,0)
   \put(-40,80){$d=3$:}
    \put(0,0){\line(1,0){80}} \put(0,0){\line(0,1){80}}
     \put(80,0){\line(-1,1){80}}
    \put(18,18){\line(1,0){26}} \put(18,18){\line(0,1){26}}
     \put(44,18){\line(-1,1){26}}
    \multiput(0,0)(20,0){5}{\circle{3}} \multiput(0,20)(20,0){4}{\circle{3}}
    \multiput(0,40)(20,0){3}{\circle{3}} \multiput(0,60)(20,0){2}{\circle{3}}
    \multiput(0,80)(20,0){1}{\circle{3}}
     \multiput(20,20)(20,0){2}{\circle*{3}}
    \multiput(20,40)(20,0){1}{\circle*{3}}
     \put(30,58){\vector(0,-1){25}} \put(27,61){$C_j^s$}
     \put(50,50){\vector(0,-1){20}} \put(47,53){$C_j$}

      \end{picture}}

    \end{picture}\end{center}
\caption{ Lagrange nodal basis $\phi_j^s$
    on subinterval ($d=2$) or subtriangle ($d=3$), c.f., \eqref{l-b}.
   \label{1-basis} }
 \end{figure}

\comment{
restart;
b:=x-> x*(1-x):
ph:=x-> (2/4-x)*(3/4-x):   phi[1]:=ph(x)/ph(1/4): p[1]:=phi[1]*b(x)/b(1/4):
ph:=x-> (1/4-x)*(3/4-x):   phi[2]:=ph(x)/ph(2/4): p[2]:=phi[2]*b(x)/b(2/4):
ph:=x-> (1/4-x)*(2/4-x):   phi[3]:=ph(x)/ph(3/4): p[3]:=phi[3]*b(x)/b(3/4):
with(linalg):
a:=matrix(3,3):
for i from 1 to 3 do for j from 1 to 3 do
   a[i,j]:=int(p[i]*phi[j],x=0..1):od: od:
b:=inverse(a);inverse(b);

}
  By the change of basis, \eqref{b-n} and \eqref{l-b},
     the linear system \eqref{m-i} is equivalent
    to the following weighted-mass linear systems:
     \an{ \label{matrix}
       \sum_{m_{j'}\in C_j}
      Q_0 v(m_{j'})\int_{C_j}\phi_{j'} \phi_{i}^s dx
     = \int_{C_j} v \phi_{i}^s dx& - \sum_{j\in C_j}
                    Q_0 v(c_j) \int_{C_j} \phi_{j} \phi_{i}^s dx,\\
     &i=1,2,...,\dim (P_{k+2-d}^{d-1}).
   \nonumber }
 The coefficient matrix in \eqref{matrix} is the mass matrix
     on the subsimplex $C_j^s$ (c.f. Figure \ref{1-basis}) with
      a positive weight:
    \a{  a_{i,j'} &= \int_{C_j} \phi_{i}^s \phi_{j'} d\b x
               = \int_{C_j} \phi_{i}^s  \phi_{j}^s w(\b x) d\b x, }
     where \a{ w(\b x) = \frac{b(\b x)}{b(\b x_{j'})} >0 \quad
    \hbox{ in interior}(C_j). }
  As the Lagrange basis $\{ \phi_i^s \}$ (on the subsimplex)
     are linearly independent,  the mass (with weight) matrix
     in \eqref{matrix} is invertible,
   and the equivalent linear system \eqref{m-i} has a unique solution too.
   For example, when $d=2$ and $k=2$,  the coefficient matrix in \eqref{matrix}
   and its inverse are
   \a{ \p{152/315 & -16/63 &8/63 \\
          -4/21   & 18/35  & -4/21 \\
        8/63 & -16/63 & 152/315} ^{-1}
      =\p{ 2655/1024 & 75/64 & -225/1024 \\
           225/256 & 45/16  & 225/256 \\
           -225/1024 & 75/64 & 2655/1024 }.
     }
   By the same argument, lifting the space dimension by 1,
    we can show that \eqref{i-i} has a unique solution too.
   In fact, the system \eqref{i-i} when $d=2$ is the same system
      \eqref{m-i} with $d=3$ there.
\end{proof}

\begin{lemma}\label{l-2}
   If $d=2$, the Scott-Zhang interpolation operator \eqref{s-z}
   preserves the volume mass of order $k-1$ and the face mass of order $k$,
   i.e.,
    \an{\label{2d-v} \int_{T} (v - Q_0 v)p_i dx &=0
        \quad \forall T\in {\cal T}_h, \ p_i \in P_{k-1}^2(T), \\
    \label{2d-f} \int_{E} (v - Q_0 v)p_i dx &=0
        \quad \forall E\in {\cal E}_h, \ p_i \in P_{k}(E),}
   where ${\cal E}_h$ is the set of edges in triangulation ${\cal T}_h$.
  \end{lemma}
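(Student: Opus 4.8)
The plan is to read off \eqref{2d-v} and \eqref{2d-f} essentially verbatim from the defining relations \eqref{i-i} and \eqref{m-i} for the nodal values of $Q_0$, once one structural fact is in place. That fact is the following: for $S=E\in\E_h$ or $S=T\in\T_h$ one has
$$
Q_0 v|_S=\sum_{x_j\in S}Q_0v(x_j)\,\phi_j|_S ,
$$
because a nodal basis function $\phi_j$ attached to a node $x_j\notin S$ restricts to a polynomial of degree $k+2$ on $S$ which vanishes at all $\dim P_{k+2}^{\dim S}$ Lagrange nodes of $S$ --- those nodes form a subset of the nodes of the element carrying $\phi_j$, and all nodes of that element except $x_j$ are roots of $\phi_j$ --- hence $\phi_j|_S\equiv 0$; this is the affine-invariance/restriction property of the Lagrange basis already noted above. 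In particular the trace of $Q_0v$ on an edge $E$ involves only the two endpoint (corner) nodes and the $k+1$ mid-edge (middle) nodes of $E$, and the trace of $Q_0v$ on a triangle $T$ involves only the corner, mid-edge and internal nodes of $T$.

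For \eqref{2d-v}, fix $T\in\T_h$. Since $d=2$, the averaging patch of every internal node $i_{j''}$ of $T$ is $C_j=T$ itself (it is forced, not chosen). By the structural fact,
$$
Q_0v|_T=\sum_{i_{j''}\in T}Q_0v(i_{j''})\phi_{j''}+\sum_{m_{j'}\subset\partial T}Q_0v(m_{j'})\phi_{j'}+\sum_{c_j\subset\partial T}Q_0v(c_j)\phi_j ,
$$
so collecting all terms of \eqref{i-i} on one side yields $\int_T(Q_0v-v)\,p_i\,dx=0$ for every $p_i\in P_{k+1-d}^{d}(T)=P_{k-1}^{2}(T)$, which is \eqref{2d-v}.

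For \eqref{2d-f}, fix $E\in\E_h$. In two dimensions each mid-edge node lies in the interior of exactly one edge, so the averaging patch of every middle node $m_{j'}$ lying on $E$ is $C_j=E$ (again forced); the --- possibly different --- patches chosen in \eqref{c-i} for the two endpoints of $E$ are irrelevant. By the structural fact $Q_0v|_E=\sum_{m_{j'}\in E}Q_0v(m_{j'})\phi_{j'}+\sum_{c_j\in E}Q_0v(c_j)\phi_j$, which is precisely the combination occurring in the integrand of \eqref{m-i} with $C_j=E$; hence \eqref{m-i} states $\int_E(Q_0v-v)\,p_i\,dx=0$ for every $p_i\in P_{k+2-d}^{d-1}(E)=P_k(E)$, i.e. \eqref{2d-f}. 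The one point that requires care --- the main obstacle, such as it is --- is the bookkeeping in the structural fact together with the observation that for middle and internal nodes in 2D the averaging patch is not a matter of choice but is the unique edge, respectively triangle, containing the node; this is exactly what makes \eqref{m-i} and \eqref{i-i} coincide, term by term, with the claimed orthogonality relations on $E$ and on $T$.
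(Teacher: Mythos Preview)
Your proof is correct and follows essentially the same approach as the paper: both read off \eqref{2d-f} and \eqref{2d-v} directly from the defining relations \eqref{m-i} and \eqref{i-i}, using that $Q_0v|_E$ (resp.\ $Q_0v|_T$) involves only the nodal basis functions attached to nodes on $E$ (resp.\ $T$). You are simply more explicit than the paper in spelling out the ``structural fact'' about the restriction of nodal basis functions and in noting that in 2D the averaging patches for middle and internal nodes are uniquely determined.
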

\begin{proof}
By the construction \eqref{m-i},
   we have
   \a{ \int_{E} (v - Q_0 v)p_i dx &=
    \int_{E} \Big( v - \sum_{m_{j'}\in E}
       Q_0v(m_{j'}) \phi_{j'}- \sum_{c_{j }\in E}
       Q_0v(c_{j }) \phi_{j }\Big)p_i dx=0. }
   That is \eqref{2d-f}.
   Here, because we have two missing dof (degrees of freedom) at the
    two end points of each edge,  the polynomial degree in mass preservation
    is reduced by two,  from $(k+2)$ to $k$.
  Similarly,  \eqref{2d-v} follows \eqref{i-i}.
    Here, the polynomial degree reduction is three as each triangle has
     three edges where the interpolation values are not free (not determined
     by \eqref{i-i}).
  \end{proof}

\begin{lemma} \label{l-3}
  If $d=3$, the Scott-Zhang interpolation operator \eqref{s-z}
   preserves the volume mass of order $k-2$ and the face mass of order $k-1$,
   i.e.,
    \an{\label{3d-v} \int_{T} (v - Q_0 v)p_i dx &=0
        \quad \forall T\in {\cal T}_h, \ p_i \in P_{k-2}^3(T), \\
    \label{3d-f} \int_{E} (v - Q_0 v)p_i dx &=0
        \quad \forall E\in {\cal E}_h, \ p_i \in P_{k-1}^2(E),}
   where ${\cal E}_h$ is the set of face triangles in
    the tetrahedral grid ${\cal T}_h$.
  \end{lemma}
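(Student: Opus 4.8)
The plan is to prove Lemma~\ref{l-3} exactly as the two-dimensional Lemma~\ref{l-2} was established, simply raising the dimension by one: I would read the face identity~\eqref{3d-f} directly off the middle-node construction~\eqref{m-i}, and the volume identity~\eqref{3d-v} off the internal-node construction~\eqref{i-i}. (When $k\le 1$ the space $P_{k-2}^3(T)$ is trivial and there is nothing to prove, so one may assume $k\ge 2$.) The only real content is the bookkeeping of which Lagrange nodes lie on a given face triangle or tetrahedron, together with the elementary count of interior nodes that produces the orders $k-1$ and $k-2$ rather than $k+2$; once the well-definedness established above guarantees that~\eqref{m-i} and~\eqref{i-i} are uniquely solvable, nothing further is needed.

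For~\eqref{3d-f} I would fix a face triangle $E\in{\cal E}_h$. Since the restriction of a nodal basis function $\phi_j$ to a subsimplex is again a nodal basis function there, and vanishes identically on $E$ when its node does not lie on $E$, the restriction to $E$ of the interpolant~\eqref{s-z} is the $P_{k+2}$ polynomial on $E$ determined by the edge-node values $Q_0 v(c_j)$ from~\eqref{c-i} and the triangle-interior values $Q_0 v(m_{j'})$ from~\eqref{m-i}, namely
\[
Q_0 v|_E = \sum_{m_{j'}\in E} Q_0 v(m_{j'})\phi_{j'}|_E + \sum_{c_j\in E} Q_0 v(c_j)\phi_j|_E .
\]
Then~\eqref{m-i} with the patch $C_j=E$ states precisely that $\int_E\bigl(Q_0 v - v\bigr)p_i\,dx = 0$ for every $p_i\in P_{k+2-d}^{d-1}(E)=P_{k-1}^2(E)$, which is~\eqref{3d-f}. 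The drop from $k+2$ to $k-1$ is the codimension contributed by the three edges of $E$, whose nodal values are fixed by~\eqref{c-i} and are not free in~\eqref{m-i}; equivalently, the number of triangle-interior Lagrange nodes of $P_{k+2}$ on $E$ is $\dim P_{k-1}^2(E)$, so~\eqref{m-i} is a square system imposing exactly $L^2(E)$-orthogonality to $P_{k-1}^2(E)$.

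For~\eqref{3d-v} the same nodal-basis argument gives, for $T\in{\cal T}_h$,
\[
Q_0 v|_T = \sum_{i_{j''}\in T} Q_0 v(i_{j''})\phi_{j''} + \sum_{m_{j'}\in T} Q_0 v(m_{j'})\phi_{j'} + \sum_{c_j\in T} Q_0 v(c_j)\phi_j ,
\]
where the last two sums exhaust the boundary nodes of $T$ (its face-interior, edge and vertex nodes). Equation~\eqref{i-i} with $C_j=T$ asserts that $\int_T Q_0 v\,p_i\,dx = \int_T\bigl(v - \sum_{m_{j'}} Q_0 v(m_{j'})\phi_{j'} - \sum_{c_j} Q_0 v(c_j)\phi_j\bigr)p_i\,dx$ for all $p_i\in P_{k+1-d}^{d}(T)=P_{k-2}^3(T)$, i.e.\ $\int_T (v-Q_0 v)p_i\,dx = 0$ for all such $p_i$, which is~\eqref{3d-v}. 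Here the reduction is by four, one for each triangular face of $T$, matching the fact that the number of cell-interior Lagrange nodes of $P_{k+2}$ on $T$ equals $\dim P_{k-2}^3(T)$.

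I expect the main --- and essentially the only --- obstacle to be purely combinatorial: one must check that in three dimensions the corner/middle/internal classification splits the nodes lying on a face triangle into its edge nodes together with its triangle-interior nodes, and splits the nodes lying on a tetrahedron into its boundary nodes together with its cell-interior nodes, so that the two restriction formulas above truly involve no other nodes. With that verified, \eqref{3d-f} and~\eqref{3d-v} follow immediately from~\eqref{m-i} and~\eqref{i-i}, exactly as in Lemma~\ref{l-2}, and no new estimates are required.
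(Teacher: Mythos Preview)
Your proposal is correct and follows exactly the paper's own approach: the paper's proof of Lemma~\ref{l-3} simply says that \eqref{3d-f} is another expression of \eqref{m-i} (losing three orders because of the three edges of each face triangle) and that \eqref{3d-v} follows from \eqref{i-i} (losing four orders because of the four faces of each tetrahedron), just as in Lemma~\ref{l-2}. Your write-up is in fact more detailed than the paper's, spelling out the restriction formulas and the dimension counts that the paper leaves implicit.
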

\begin{proof}
  As each triangle $E$ has three edges, where the interpolation is
    not determined possibly by function value on neighboring triangles,
    we lose dof's on the three edges in the interpolation.
   That is, we lose three orders in face-mass conservation in \eqref{m-i}.
   \eqref{3d-f} is simply another expression of \eqref{m-i}, as
  in the proof of Lemma \ref{l-2}.
   By \eqref{i-i}, \eqref{3d-v} follows.
   Here the polynomial-degree deduction in mass conservation is 4, due
   to 4 face-triangles each tetrahedron.
  \end{proof}
\begin{remark} By Lemmas \ref{l-2} and \ref{l-3},
   the mass preservation on element and on faces is one order
   higher than the requirement \eqref{Q0}, when $d=2$.
   When $d=3$, \eqref{3d-v} and \eqref{3d-f} imply \eqref{Q0}.
 \end{remark}

\begin{theorem}\label{s-z-T} The Scott-Zhang interpolation operator
   \eqref{s-z} is of optimal order in approximation, i.e., when $k\ge -1$,
   \an{\label{o-a} \|v-Q_0v\|+h\|v-Q_0v\|_1& \le C h^{k+3}\|v\|_{k+3}
    \quad \forall v \in  H^{k+3}(\Omega). }
   Further, when $k\ge 0$,
    \an{\label{o-2}
    \big(\sum_{T\in {\cal T}_h} h^2 |v-Q_0v |_{H^2(T)}^2 \big)^{1/2}
        & \le C h^{k+3}
           \|v\|_{k+3}
    \quad \forall v \in  \cap H^{k+3}(\Omega). }
 \end{theorem}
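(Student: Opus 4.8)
The plan is to follow the classical three-ingredient analysis of Scott--Zhang-type operators: (a) $Q_0$ is \emph{local}; (b) $Q_0$ \emph{reproduces} global polynomials, $Q_0p=p$ for every $p\in P_{k+2}(\Omega)$; and (c) $Q_0$ satisfies the \emph{scaled stability} bound
\[
|Q_0 v|_{m,T}\ \le\ C\,h_T^{-m}\bigl(\|v\|_{0,\tilde T}+h_T|v|_{1,\tilde T}\bigr),\qquad 0\le m\le 2,\ v\in H^1(\tilde T),
\]
where $\tilde T:=\bigcup\{\overline\tau:\tau\in\T_h,\ \overline\tau\cap\overline T\ \text{contains a Lagrange node}\}$ is a patch of $O(1)$ shape-regular elements, each of diameter $\le Ch_T$. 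Granting (a)--(c), estimates \eqref{o-a} and \eqref{o-2} will follow from the Bramble--Hilbert lemma and a summation over $\T_h$, as sketched at the end.

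Locality (a) is immediate from the construction: each nodal value $Q_0v(c_j)$, $Q_0v(m_j)$, $Q_0v(i_j)$ is defined through an integral over a single simplex $C_j$ --- of dimension $d-1$ in \eqref{c-i}, \eqref{m-i} or dimension $d$ in \eqref{i-i} --- and for a node of $T$ that simplex lies in $\tilde T$, so that $(Q_0v)|_T=\sum_{x_j\in T}Q_0v(x_j)\phi_j$ depends only on $v|_{\tilde T}$. For reproduction (b) I would chase the three node levels. On a $(d-1)$-simplex $C_j$ the trace $p|_{C_j}$ lies in $P_{k+2}(C_j)$, so $p|_{C_j}=\sum_i p(x_i)\phi_i$ in the Lagrange basis of $C_j$; the duality relation \eqref{dual} then gives $Q_0p(c_j)=\int_{C_j}\psi_j^{[(d-1)D]}p=p(c_j)$. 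Feeding this into \eqref{m-i} with $v=p$, the choice $Q_0p(m_{j'})=p(m_{j'})$ makes the bracketed quantity the Lagrange interpolant of $p|_{C_j}$ minus $p|_{C_j}$, i.e.\ zero; by the uniqueness established in the lemma on well-definedness of \eqref{s-z} this is \emph{the} solution. The identical argument one dimension higher, applied to \eqref{i-i}, gives $Q_0p(i_{j''})=p(i_{j''})$, whence $Q_0p=\sum_j p(x_j)\phi_j=p$.

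The technical core --- and the step I expect to be the main obstacle --- is the scaled stability bound (c), since it demands bookkeeping powers of $h_T$ consistently through repeated dimension reductions (a face of a face), the change of basis to the subsimplex $C_j^s$, and the interplay of the trace inequality \eqref{trace} with an inverse inequality. I would bound each nodal functional separately. For a corner node, Cauchy--Schwarz gives $|Q_0v(c_j)|\le\|\psi_j^{[(d-1)D]}\|_{0,C_j}\|v\|_{0,C_j}$; since $\psi_j$ is the affine-invariant dual basis on the reference simplex, scaling to $C_j$ yields $\|\psi_j^{[(d-1)D]}\|_{0,C_j}\sim h_T^{-(d-1)/2}$, while \eqref{trace} on an element $\tau\supset C_j$ gives $\|v\|_{0,C_j}^2\le C(h_T^{-1}\|v\|_{0,\tau}^2+h_T|v|_{1,\tau}^2)$, so that $h_T^{d/2}|Q_0v(c_j)|\le C(\|v\|_{0,\tau}+h_T|v|_{1,\tau})$. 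For middle and internal nodes I would use the reformulation from the well-definedness lemma: after the change of basis \eqref{b-n}--\eqref{l-b} the systems \eqref{m-i}, \eqref{i-i} become weighted mass systems on $C_j^s$ whose suitably scaled coefficient matrices are uniformly invertible --- the positivity $w(\b x)=b(\b x)/b(\b x_{j'})>0$ in the interior of $C_j$ is exactly what guarantees this --- so their inverses are bounded independently of $h_T$; estimating the right-hand sides, namely $\int_{C_j}v\phi_i^s$ together with the already-controlled corner (resp.\ corner-and-middle) contributions, by the same Cauchy--Schwarz/scaling/\eqref{trace} chain gives $h_T^{d/2}|Q_0v(m_j)|,\ h_T^{d/2}|Q_0v(i_j)|\le C(\|v\|_{0,\tilde T}+h_T|v|_{1,\tilde T})$. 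Since $(Q_0v)|_T$ is a $P_{k+2}$ polynomial, the inverse inequality on the shape-regular element $T$ then yields $|Q_0v|_{m,T}\le Ch_T^{-m}\|Q_0v\|_{0,T}\le Ch_T^{-m}h_T^{d/2}\max_{x_j\in T}|Q_0v(x_j)|$, which is (c).

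To conclude, fix $T$ and, by the Bramble--Hilbert / Deny--Lions lemma on the shape-regular patch $\tilde T$, pick $\pi v\in P_{k+2}$ with $\|v-\pi v\|_{j,\tilde T}\le Ch_T^{k+3-j}|v|_{k+3,\tilde T}$ for $0\le j\le k+3$. Since $Q_0(\pi v)=\pi v$ by (b) and $Q_0$ is local by (a), $v-Q_0v=(v-\pi v)-Q_0(v-\pi v)$ on $T$, so for $0\le m\le 2$, applying (c) to $v-\pi v$,
\[
|v-Q_0v|_{m,T}\le|v-\pi v|_{m,T}+|Q_0(v-\pi v)|_{m,T}\le Ch_T^{k+3-m}|v|_{k+3,\tilde T}.
\]
Multiplying by $h_T^{2m}$, summing over $T\in\T_h$, and using the finite-overlap property of the patches $\{\tilde T\}$ (a consequence of shape-regularity) gives $\bigl(\sum_{T}h_T^{2m}|v-Q_0v|_{m,T}^2\bigr)^{1/2}\le Ch^{k+3}\|v\|_{k+3}$. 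Taking $m=0$ and $m=1$ and combining yields \eqref{o-a}, valid for $k\ge-1$; taking $m=2$, which is meaningful once $k\ge0$ (so that $P_{k+2}$ reaches degree $3$ and $H^{k+3}\hookrightarrow H^3$), yields the $H^2$-seminorm bound \eqref{o-2}.
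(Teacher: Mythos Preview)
Your proposal is correct and follows essentially the same route as the paper: establish polynomial reproduction $Q_0p=p$ for $p\in P_{k+2}$ node type by node type (corner via the dual basis, then middle and internal via the uniqueness in the well-definedness lemma---the paper does the equivalent step via the bubble decomposition $\phi_{j'}=\phi_{j'}^s\,b(\b x)/b(\b x_{j'})$), then invoke the local stability $|Q_0v|_{m,T}\le C\|v\|_{1,S_T}$ by scaling the nodal functionals (which the paper simply cites as Theorem~3.1 of \cite{Scott-Zhang}), and conclude via Bramble--Hilbert on patches with finite overlap. One cosmetic slip: your justification for the restriction $k\ge 0$ in \eqref{o-2} should read ``$P_{k+2}$ reaches degree~$2$'' rather than degree~$3$, since $m=2$ in the local Bramble--Hilbert step only requires $k+2\ge 2$.
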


\begin{proof} The Scott-Zhang operator preserves a degree $(k+2)$ polynomial
   on the star union of an element $T$,
   \a{  S_T = \cup_{\overline {T'}\cap T \ne \emptyset }  \overline{T'},\quad
         T, T' \in {\cal T}_h. }
  That is, \an{\label{preserve}
          Q_0 v = v \quad \forall v \in P_{k+2}^d(S_T), }
    when $k\ge -1$.
   \eqref{preserve} is shown in three steps.  First, by \eqref{c-i},
    when $v\in P_{k+2}$, the dual basis defines
    \an{\label{i-c-j} Q_0 v(c_j) = v(c_j), }
   at all corner nodes.
    In the second step, by \eqref{i-c-j} and \eqref{matrix},
       \eqref{m-i} holds for
       $p_i=\psi_{i}^s$
    too where $m_{i'}$ are all middle nodes on $C_j$.
    That is,
    \an{\label{i-m} \int_{C_j} \Big(v-\sum_{j\in C_j} Q_0(c_j)\phi_j
        -\sum_{m_{j'}} Q_0 v(m_{j'}) \phi_{j'}\Big) \phi_i^s dx
           &= 0.}
    By \eqref{i-c-j}, as $v\in P_{k+2}$,
    \a{ v-\sum_{j\in C_j} Q_0(c_j)\phi_j = v_c b(\b x) \quad
    \hbox{ for some } v_c \in P_{k+2-d}^{d-1}, }
     where $b(\b x)$ is a bubble function, cf. \eqref{l-b}.
    For the middle node basis functions, we have also, c.f. \eqref{l-b},
       \a{ \phi_{j'}= \phi_{j''}^s b(\b x)/b(\b x_{j'})
    \quad \forall m_{j'}\in C_j. }
  Thus \eqref{i-m} implies,
     where $ \sum_{m_{j'}} Q_0 v(m_{j'}) \phi_{j'}=v_m b(\b x)$
      for some $v_m\in P_{k+2-d}^{d-1}$,
      \a{  & \int_{C_j} (v_b-v_m) \phi_i^s b(\b x) d\b x  = 0, \\
      \Rightarrow \ & v_b-v_m\equiv 0 \quad \hbox{and } \
            v=Q_0 v \quad\hbox{ on } \ C_j.}
   Therefore, at all middle nodes,
      \an{\label{i-m-j} Q_0 v(m_{j'}) = v(m_{j'}). }
   In the third step,  by \eqref{i-i} and the same argument in the second step,
      \a{ Q_0 (i_{j''}) = v(i_{j''}), }
   at all internal nodes. Thus $Q_0 v=v\in P_{k+2}$.

   We then use the standard scaling argument (on the dual
        basis functions)  and the Sobolev inequality,
    as in Theorem 3.1 of \cite{Scott-Zhang},  it follows
    that
   \a{  |Q_0 v |_{H^1(T) } \le C \|v\|_{H^1(S_T) }
          \quad \forall v\in H^1 (\Omega).}
   The above stability result leads directly to the
    optimal-order approximation \eqref{o-a},  following the standard
      argument (i.e., by \eqref{preserve} and the existence of local
     Taylor polynomials, c.f. for example, \cite{Brenner-Scott}),
    as shown in Theorem 4.1 of \cite{Scott-Zhang}.
  We note again that the Scott-Zhang operator here is a refined version
    of  the Scott-Zhang operator in \cite{Scott-Zhang}.
   After showing the local preservation of $P_{k+2}$ polynomials above,
     the proof of the theorem is the same as that in \cite{Scott-Zhang}.
   \eqref{o-2}  is (4.4) in \cite{Scott-Zhang},
   with $p=q=2$, $m=2$, and $l=k+3$ there.

 \end{proof}


\begin{thebibliography}{99}

\bibitem{ab}
{\sc D. N. Arnold and F. Brezzi}, {\em Mixed and nonconforming
finite element methods: implementation, postprocessing and error
estimates}, RAIRO Modél. Math. Anal. Numér., 19(1), (1985),  pp.~7-32.

 \bibitem{Argyris} {\sc J. H. Argyris, I. Fried, D. W. Scharpf},
    {\em The TUBA family of plate elements for the matrix
     displacement method,} The Aeronautical Journal of the
    Royal Aeronautical Society 72 (1968), pp.~514-517.

\bibitem{Bell} {\sc K. Bell},
   {\em A refined triangular plate bending element},
      Internal. J. Numer. methods Engrg,, 1 (1969), pp.~101-122.

\bibitem{Brenner-Scott}
{\sc  S. C. Brenner and L. R. Scott},
    The mathematical theory of finite element methods. Third edition.
    Texts in Applied Mathematics, 15. Springer, New York, 2008.


\bibitem{bs}
{\sc S, Brenner and L. Sung},
{\em $C^0$ interior penalty methods for fourth order elliptic boundary
   value problems
   on polygonal domains},  J. Sci. Comput., 22/23 (2005), pp.~83-118.

\bibitem{ddps}
{\sc J. Douglas Jr., T. Dupont, P. Percell and R. Scott},
  {\em A family of $C^1$ finite elements with optimal approximation
    properties for various Galerkin methods for 2nd and 4th order problems},
   RAIRO Anal. Numer. 13 (1979), no. 3, pp.~227-255.

\bibitem{ghlmt}
{\sc G. Engel, K. Garikipati, T. Hughes, M.G. Larson, L. Mazzei, and R. Taylor},
 {\em Continuous/discontinuous finite element approximations of fourth order
  elliptic problems in structural and continuum mechanics with applications
    to thin beams and plates, and strain
   gradient elasticity},
   Comput. Meth. Appl. Mech. Eng., 191 (2002), pp.~3669-3750.

\bibitem{falk}
{\sc R. Falk},
{\em Approximation of the biharmonic equation by a mixed finite element method},
   SIAM J. Numer. Anal. 15 (1978), pp.~556-577.
Numer. Anal. 15 (1978), pp.~556-577.

\bibitem{Fraeijs}
{\sc B. Fraeijs de Veubeke},
   {\em A conforming finite element for plate bending, } in:
    O.C. Zienkiewicz and G.S. Holister (Eds.),
   Stress Analysis,  Wiley, New York, 1965,
     pp.~145-197.


\bibitem{Clough}
{\sc R.W. Clough and J.L. Tocher},
{\em Finite element stiffness matrices for analysis of
    plates in bending}, in:
  Proceedings of the Conference on Matrix Methods in Structural
    Mechanics, Wright Patterson A.F.B. Ohio, 1965.

\bibitem{gnp}
{\sc T. Gudi,  N. Nataraj, A. K. Pani},
{\em Mixed Discontinuous Galerkin Finite Element Method
for the Biharmonic Equation}, J Sci Comput, 37 (2008), no. 2, pp.~139-161.

\bibitem{hhz}
{\sc  J. Hu, Y. Huang and S. Zhang},
{\em    The lowest order
    differentiable finite element on rectangular grids},
     SIAM Num. Anal. 49 (2011), no. 4, pp.~1350-1368.

\bibitem{morley}
{\sc L.S.D. Morley},
 {\em The triangular equilibrium element in the solution of plate
    bending problems},
  Aero. Quart., 19 (1968), pp.~149-169.

\bibitem{monk}
{\sc P. Monk},
{\em  A mixed finite element methods for the biharmonic equation},
   SIAM J. Numer. Anal. 24 (1987), pp.~737-749.

\bibitem{ms}
{\sc I. Mozolevski and E. Süli}, {\em
  hp-Version a priori error analysis of interior penalty discontinuous
Galerkin finite element approximations to the biharmonic equation},
   J. Sci. Comput. 30 (2007), no. 3, pp.~465-491.

\bibitem{mwy-wg-stabilization}
{\sc L. Mu, J. Wang, and X. Ye}, {\em Weak Galerkin finite element
methods on polytopal meshes}, arXiv:1204.3655v2.

\bibitem{mwwy}
{\sc L. Mu, J. Wang, Y. Wang and X. Ye},
 {\em Weak Galerkin mixed finite element method for the biharmonic
 equation}, arXiv:1210.3818.

\bibitem{mwy-bi}
{\sc L. Mu, J. Wang and X. Ye}, {\em Weak Galerkin finite element
methods for the biharmonic equation on polytopal meshes}, preprint.

\bibitem{Percell} {\sc P. Percell},
   {\em     On cubic and quartic Clough-Tocher finite elements},
        SIAM J. Numer. Anal. 13 (1976), pp.~100-103.

 \bibitem{Powell} {\sc  M.J.D. Powell, M.A. Sabin},
  {\em Piecewise quadratic approximations on triangles},
    ACM Transactions on Mathematical   Software,  3-4 (1977), pp.~316­325.

\bibitem{Scott-Zhang}
{\sc L. Scott and S. Zhang},
{\em Finite element interpolation of nonsmooth
  functions satisfying boundary conditions}, Math. Comp., 54 (1990),
  pp.~483-493.

\bibitem{wy-mixed}
{\sc J. Wang and X. Ye},
 {\em A weak Galerkin mixed finite element method for second-order
   elliptic problems}, arXiv:1202.3655v1.

\bibitem{wy}
{\sc J. Wang and X. Ye}, {\em A weak Galerkin finite element method
for second-order elliptic problems}, arXiv:1104.2897v1, 2011,
Journal of Computational and Applied Mathematics, 241 (2013), pp.
103-115.

\bibitem{Zenisek}
{\sc A. \v Zeni\v sek},
  {\em Alexander Polynomial approximation on tetrahedrons in the
    finite element method},
   J. Approximation Theory  7  (1973), pp.~334-351.

\bibitem{sz2}
{\sc S. Zhang},  {\em A C1-P2 finite element without nodal basis},
     M2AN 42 (2008), pp.~175-192.

\bibitem{sz3}
{\sc S. Zhang},  {\em A family of 3D continuously differentiable
   finite elements
           on tetrahedral grids},  Applied Numerical Mathematics,
          59 (2009), no. 1,   pp.~219-233.
\bibitem{sz}
{\sc S. Zhang},  {\em On the full $C_1$-$Q_k$
          finite element spaces on rectangles and cuboids},
       Adv. Appl. Math. Mech., 2 (2010), pp.~701-721.
\bibitem{Zlamal}
{\sc M. Zl\'amal},  {\em On the finite elmeent method},
       Numer. math. 12 (1968),  pp.~394-409.


\end{thebibliography}
\end{document}